\newtheorem{thm}{Theorem}[section]
\newcommand{\EM}{the Eilenberg--Moore spectral sequence}
\newcommand{\Tor}{\rm{Tor}}
\newcommand{\Cot}{\rm{Cotor}}
\begin{document}
\pagestyle{plain}
\title{The module category weight of compact exceptional  Lie groups}
\author{Younggi Choi}
\address{ Department of Mathematics Education, Seoul National University,
Seoul 151-748, Korea. Fax number: 82-2-889-1747}
\email{yochoi@snu.ac.kr} \subjclass[2000]{ 55M30, 57T35 }
\keywords{Lusternik-Schnirelmann category, category weight, module
category weight,  exceptional Lie groups, Eilenberg--Moore
spectral
 sequence}
 \abstract We compute the lower bound estimate for the module
category weight of  compact exceptional  Lie groups  by
   analyzing  several
Eilenberg--Moore type spectral sequences.
\endabstract
 \maketitle
\title

\renewcommand{\theequation}{\thesection.\arabic{equation}}
\section{Introduction}
\label{sec-0}

The Lusternik - Schnirelmann category $cat (X)$ of a topological
space $X$ is the least integer $n$  such that there exists an open
cover $X=U_{1} \cup \cdots  \cup U_{n+1}$ with each $U_{i}$
contractible to a point in $X$. There are other computable
homotopy invariants  such as cup length, category weight, and
module category weight with the relation \cite{IK} : $ cup(X;\Bbb{F}_{p})
\leq wgt(X;\Bbb{F}_{p}) \leq Mwgt(X;\Bbb{F}_{p}) \leq  cat(X)$.

 Toomer introduced the explicit formula for the difference between the cup
 length
 and the category weight. Using the  formula he calculated the difference $cup(X;\Bbb{F}_{p})-wgt(X;\Bbb{F}_{p}) $
 of any simply connected compact simple Lie group \cite{T}. In fact,
 it is precisely $F_4, E_6, E_7, E_8$ which yield a positive
difference.

 On the other hands, Iwase and Kono \cite{IK} determined
 $cat (Spin(9))=8$ by  computing the lower bound of the difference between the category weight  and the module category weight of
$Spin(9)$, which  is  $Mwgt(Spin(9);\Bbb{F}_{2}) -
wgt(Spin(9);\Bbb{F}_{2} )\geq 2$.

In this paper we compute the lower bound estimate for the module
category weight of exceptional compact simple Lie groups by
studying   the difference between the category weight and the
module category weight.

This paper is organized as follows. In section 2, we collects some
known facts, which will be used in next sections. In section 3, we
compute  the module category weight  with respect to $\Bbb{F}_{2}$
coefficients of compact exceptional   Lie groups by
   analyzing  several
Eilenberg--Moore type spectral sequences. In section 4, we compute
the module category weight  with respect to $\Bbb{F}_{3}$
coefficients of compact exceptional  Lie groups by the similar
method as in the case of $\Bbb{F}_{2}$ coefficients.

We would like to thank  Professor M. Mimura and T. Nishimoto and for his
suggestions and valuable comments.

\section{Some Known Facts }
\label{sec-5} \setcounter{thm}{0} \noindent
 Throughout this paper,
the subscript of an element always means the degree of the
element, for example, the degree of $x_{i}$ is $i$. Let $E(x)$ be
the exterior algebra on $x$ and $\Bbb{F}_{2}[x]$ be the polynomial
 algebra on $x$   and $\Gamma (x)$ be the divided power algebra on $x$ which
 is free over $\gamma_{i} (x)$ with coproduct
 \[ \Delta (\gamma_{n}(x)) = \sum _{i=0}^{n} \gamma _{n-i}(x)\otimes
 \gamma _{i}(x) \]
 and the product
 \[   \gamma _{i}(x) \gamma _{j}(x)={i+j\choose i} \gamma _{i+j}(x) .\]

 We define
$cup(X;\Bbb{F}_{p})$, the  cup-length with respect to
$\Bbb{F}_{p}$, by the least integer $m$ such that  $x_{1}\cdots
x_{m+1}=0$ for any $m + 1$ elements $x_{i}\in
H^{*}(X;\Bbb{F}_{p})$.
 Let $P^{m}(\Omega X)$ be
the $m$ th projective space, in the sense of Stasheff \cite{ST},
such that there is a homotopy equivalence $P^{\infty}(\Omega
X)\simeq X$. Let  $e_{m}:P^{m}(\Omega X) \rightarrow
¡æP^{\infty}(\Omega X)\simeq X$ be the inclusion map. Consider
$(e_{m})^{*}:H^*(X;\mathbb{F}_{p}) \rightarrow H^*(P^{m}(\Omega
X);\mathbb{F}_{p})$. Then we can define category weight $
wgt(X;\Bbb{F}_{p})$ and module category weight
$Mwgt(X;\Bbb{F}_{p})$ as follows\cite{IK}:
$$\begin{array}{rcl}
 wgt(X;\Bbb{F}_{p}) &=& {\rm min} \{  m  |\,
        (e_{m})^{*}  \mbox{ is a   monomorhsim}
        \}, \\
 Mwgt(X;\Bbb{F}_{p})&=& {\rm min} \{  m  |\, (e_{m})^{*}  \mbox{ is a
split monomorphism of all Steenrod algebra module}
 \}.
\end{array} $$
\noindent Then we have the following relation \cite{IK}.
$$cup(X;\Bbb{F}_{p}) \leq wgt(X;\Bbb{F}_{p}) \leq
Mwgt(X;\Bbb{F}_{p}) \leq  cat(X).$$ Now we list the mod $p$
cohomology of the exceptional Lie groups. We refer \cite{MT} for
the condensed treatment of these cohomology including Hopf algebra
structure and the action of the Steenrod algebra.

\begin{thm}
\label{mod2} The mod 2 cohomology of the exceptional Lie groups
$G_2$, $F_4$, $E_{6}$, $E_{7}$, and $E_{8}$ are as follows.
\begin{eqnarray*}
H^*(G_2;\mathbb{F}_{2}) &\cong & \mathbb{F}_{2}[x_3]/(x_3^4) \otimes E(Sq^2 x_3)\\
H^*(F_4;\mathbb{F}_{2})&\cong & \mathbb{F}_{2}[x_3]/(x_3^4)
\otimes E(Sq^2 x_3,x_{15},Sq^8x_{15}) \\
 H^*(E_{6};\mathbb{F}_{2}) &\cong &
      \mathbb{F}_{2}[x_3]/(x_3^4)\otimes
          E(Sq^2x_3,Sq^{4,2}x_3,x_{15},Sq^{8,4,2}x_3,Sq^8x_{15})\\
 H^*(E_{7};\mathbb{F}_{2})&\cong &
\mathbb{F}_{2}[x_3,Sq^2x_3,Sq^{4,2}x_3]/(x_3^4, (Sq^2x_3)^4, (Sq^{4,2}x_3)^4)\\
                       &   &\otimes E(x_{15},Sq^{8,4,2}x_3,Sq^8x_{15},Sq^{4,8}x_{15})\\
 H^*(E_{8};\mathbb{F}_{2})&\cong &
\mathbb{F}_{2}[x_3]/(x_3^{16})\otimes
\mathbb{F}_{2}[Sq^2x_3]/((Sq^2 x_3)^8)\\
         &  & \otimes \mathbb{F}_{2}[Sq^{4,2}x_3,x_{15}]/((Sq^{4,2}x_3)^4, x_{15}^4)\\
         &  & \otimes E(Sq^{8,4,2}x_3,Sq^8x_{15},Sq^{4,8}x_{15},Sq^{2,4,8}x_{15}).\\
\end{eqnarray*}
\end{thm}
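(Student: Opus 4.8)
The plan is to treat each $H^*(G;\mathbb{F}_2)$ as a connected, finite-dimensional, graded-commutative Hopf algebra over $\mathbb{F}_2$ carrying an action of the mod $2$ Steenrod algebra, and to pin down its algebra generators, their heights, and the Steenrod operations linking them; this is the classical computation recorded in \cite{MT}, which I would reconstruct as follows. The structural starting point is Borel's theorem: over the perfect field $\mathbb{F}_2$ such a Hopf algebra is, as an algebra, a tensor product of monogenic factors $\mathbb{F}_2[x]/(x^{2^s})$, with the case $s=1$ giving the exterior factor $E(x)$. Since a compact Lie group has finite-dimensional cohomology there are no free polynomial factors, so the entire statement reduces to three kinds of data: the degrees of a minimal generating set, the truncation heights $2^s$, and the identification of each higher generator as an admissible $Sq^I$ applied to a lower one.

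The generator degrees and the total dimension are constrained by the rational cohomology $H^*(G;\mathbb{Q})=E(y_{2d_1-1},\dots,y_{2d_\ell-1})$, where the $d_i$ are the degrees of the fundamental invariants of the Weyl group; this fixes the Poincar\'e series and hence the pool of possible mod $2$ generators and heights. To extract the Steenrod action I would invoke the known computation of $H^*(BG;\mathbb{F}_2)$ together with the Eilenberg--Moore spectral sequence $\mathrm{Tor}_{H^*(BG;\mathbb{F}_2)}(\mathbb{F}_2,\mathbb{F}_2)\Rightarrow H^*(G;\mathbb{F}_2)$ of the path--loop fibration $G\simeq\Omega BG\to PBG\to BG$. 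Since Steenrod operations commute with the transgression, the polynomial generators of $H^*(BG;\mathbb{F}_2)$ descend to the generators $x_3,x_{15}$ and force their descendants $Sq^2x_3,\ Sq^{4,2}x_3,\ Sq^8x_{15},\dots$ to occur as genuine indecomposables.

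A more hands-on route, in the spirit of the spectral-sequence arguments used later in the paper, is to run the Serre spectral sequences attached to the inclusions relating these groups and their homogeneous spaces, starting from $G_2/SU(3)\simeq S^6$ with $H^*(SU(3);\mathbb{F}_2)=E(x_3,x_5)$ and $x_5=Sq^2x_3$. Transgressing and then resolving the resulting multiplicative extensions produces the truncations: the relation $x_3^4=0$ already seen in $G_2$ and $F_4$, and the larger heights such as $x_3^{16}=0$ and $(Sq^2x_3)^8=0$ in $E_8$, each pinned down by matching the $E_\infty$-page against the total dimension predicted rationally.

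The main obstacle is the last two pieces of data for $E_7$ and $E_8$: establishing the precise truncation heights and verifying that the listed admissible monomials $Sq^{I}x$ are truly indecomposable, rather than decomposables lying in the same degree, demands careful control of the Steenrod action and of the extension problems in the spectral sequence. These are exactly the delicate points settled in \cite{MT}, which I would follow in detail; the factors for $G_2$, $F_4$, and $E_6$ are comparatively routine once the generators and the single height relation $x_3^4=0$ are identified.
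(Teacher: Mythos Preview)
Your sketch is a reasonable outline of the classical computation, but note that the paper does not actually prove this statement: Theorem~\ref{mod2} sits in the section ``Some Known Facts'' and is simply quoted from the literature, with the sentence ``We refer \cite{MT} for the condensed treatment of these cohomology including Hopf algebra structure and the action of the Steenrod algebra.'' There is therefore no proof in the paper to compare your proposal against.

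That said, the strategy you describe---Borel's structure theorem for finite Hopf algebras over $\mathbb{F}_2$, the constraint on Poincar\'e series coming from the rational type, and the identification of the Steenrod action via transgression from $H^*(BG;\mathbb{F}_2)$ or via Serre spectral sequences for subgroup inclusions and homogeneous spaces---is exactly the route taken in \cite{MT} and the original sources it collects (Borel, Araki, Thomas, Kono--Mimura). Your caveat about the delicate points for $E_7$ and $E_8$ is well placed: the determination of the heights and of the indecomposability of classes such as $Sq^{4,8}x_{15}$ and $Sq^{2,4,8}x_{15}$ really does require the careful extension and Steenrod-algebra arguments of Kono--Mimura \cite{KM}, and your sketch correctly defers to those. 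In short, your proposal is not so much an alternative proof as an accurate summary of what lies behind the citation the paper gives.
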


\begin{thm}
\label{mod3}The mod 3 cohomology of the exceptional Lie groups
$G_2$, $F_4$, $E_{6}$, $E_{7}$, and $E_{8}$ are as follows.
\begin{eqnarray*}
H^*(G_2;\mathbb{F}_{3}) &\cong & E(x_3, x_{11})\\
H^*(F_4;\mathbb{F}_{3})&\cong & \mathbb{F}_{3}[\beta {\mathcal
P}^1 x_3]/((\beta {\mathcal P}^1 x_3)^3)
        \otimes  E(x_3,{\mathcal P}^1x_3, x_{11},{\mathcal P}^1x_{11}) \\
 H^*(E_6;\mathbb{F}_{3}) & \cong  &
     \mathbb{F}_{3}[\beta {\mathcal P}^1
x_3]/((\beta {\mathcal P}^1 x_3)^3)
        \otimes
 E(x_3,{\mathcal P}^1x_3,x_9,x_{11},{\mathcal P}^1x_{11},x_{17})\\
H^*(E_7;\mathbb{F}_{3})& \cong & \mathbb{F}_{3}[\beta {\mathcal
P}^1x_3]/((\beta
{\mathcal P}^1x_3)^3)\\
                   &  &  \otimes   E(x_3,{\mathcal P}^1x_3,x_{11},{\mathcal P}^1x_{11},
                             {\mathcal P}^{3,1}x_3,x_{27},x_{35})\\
H^*(E_8;\mathbb{F}_{3}) & \cong & \mathbb{F}_{3}[\beta {\mathcal
                P}^1x_3,\beta {\mathcal P}^{3,1}x_3]/((\beta {\mathcal P}^1x_3)^3,(\beta
                    {\mathcal P}^{3,1}x_3)^3)\\
   &  & \otimes E(x_3,{\mathcal P}^1x_3,x_{15},{\mathcal P}^{3,1}x_3,{\mathcal P}^3x_{15},
      x_{35},x_{39},x_{47}).
\end{eqnarray*}
\end{thm}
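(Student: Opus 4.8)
The plan is to reconstruct each algebra from three ingredients: the rational cohomology, the action of the mod $3$ Steenrod algebra, and Borel's structure theorem for graded commutative Hopf algebras, with the $3$-torsion data supplied by spectral-sequence and higher Bockstein arguments; we refer to \cite{MT} for the underlying classical computations. First I would dispose of $G_2$. Since $H^*(G_2;\mathbb{Z})$ carries only $2$-torsion, the universal coefficient theorem gives $H^*(G_2;\mathbb{F}_3)\cong H^*(G_2;\mathbb{Q})\otimes\mathbb{F}_3$, which is the exterior algebra $E(x_3,x_{11})$ on the transgressive generators in degrees $2m_i+1$ determined by the exponents $m_1=1,\ m_2=5$. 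This settles the torsion-free case and fixes the fundamental class $x_3$.

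For $F_4,E_6,E_7,E_8$, which do carry $3$-torsion, I would use the loop fibration $G\simeq\Omega BG$ and run the Eilenberg--Moore spectral sequence
$$\Tor_{H^*(BG;\mathbb{F}_3)}(\mathbb{F}_3,\mathbb{F}_3)\Longrightarrow H^*(G;\mathbb{F}_3),$$
feeding in the known mod $3$ cohomology of the classifying spaces; equivalently one may restrict along a maximal-rank subgroup such as $\mathrm{Spin}(9)\subset F_4$ and compare Serre spectral sequences. By Borel's theorem the abutment is, as an algebra, a tensor product of exterior algebras on odd-degree generators and truncated polynomial algebras $\mathbb{F}_3[y]/(y^{3^k})$ on even-degree generators. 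The generators are produced by letting the Steenrod operations act on $x_3$: applying $\mathcal{P}^1,\mathcal{P}^3$ and $\beta$ yields $\mathcal{P}^1x_3$, $\mathcal{P}^{3,1}x_3$, the even class $\beta\mathcal{P}^1x_3$, and (for $E_8$) $\beta\mathcal{P}^{3,1}x_3$, while the remaining odd primitive classes (the $x_j$ appearing in the statement for each group) and their images under $\mathcal{P}^1$ and $\mathcal{P}^3$ account for the other exterior factors. That the operations propagate in this way through the spectral sequence is guaranteed by Kudo's transgression theorem, which makes $\mathcal{P}^i$ commute with transgression starting from the transgression $\tau(x_3)\in H^4(BG;\mathbb{F}_3)$.

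The main obstacle is to pin down the even-degree truncated generators together with their truncation heights. Kudo's theorem shows that an even class such as $\beta\mathcal{P}^1x_3$ (degree $8$) is itself transgressive, and that its successive powers survive in the spectral sequence until they are cut off by a higher Bockstein; the height is therefore dictated by the order of the associated $3$-torsion in $H^*(G;\mathbb{Z})$, which for these groups is exactly $3$, forcing $(\beta\mathcal{P}^1x_3)^3=0$ and, for $E_8$, also $(\beta\mathcal{P}^{3,1}x_3)^3=0$. To verify that no power vanishes prematurely and that every product listed is nonzero, I would compare the resulting Poincar\'e series with the rational one $\prod_i(1+t^{2m_i+1})$: since $G$ is a closed orientable manifold of dimension $\sum_i(2m_i+1)$, Poincar\'e duality and the total-dimension count eliminate the competing extensions and certify the stated tensor-product decomposition. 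The genuinely delicate points are thus the determination of the higher Bockstein lengths (the exponents $3^k$) and the resolution of the multiplicative extensions in the spectral sequence, both of which are controlled by the torsion arithmetic and the duality constraint rather than by any formal manipulation.
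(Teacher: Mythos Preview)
The paper does not give its own proof of this theorem: it is listed in Section~2 (``Some Known Facts'') and simply attributed to the literature via the reference \cite{MT} (Mimura--Toda, \emph{Topology of Lie Groups}). So there is nothing in the paper to compare your argument against; the author is quoting, not proving.

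As for your sketch itself, it is broadly in the spirit of the classical computations, but one step is logically inverted. You propose to feed ``the known mod~$3$ cohomology of the classifying spaces'' into the Eilenberg--Moore spectral sequence
\[
\Tor_{H^*(BG;\mathbb{F}_3)}(\mathbb{F}_3,\mathbb{F}_3)\Longrightarrow H^*(G;\mathbb{F}_3).
\]
Historically and in practice this is circular: $H^*(BG;\mathbb{F}_3)$ for the exceptional groups is \emph{harder} than $H^*(G;\mathbb{F}_3)$ and was computed from the latter (via the Rothenberg--Steenrod or May spectral sequence), not the other way round. The alternative you mention in passing---restricting along maximal-rank subgroups ($\mathrm{Spin}(9)\subset F_4$, $F_4\subset E_6$, etc.) and running the Serre spectral sequence of the associated homogeneous-space fibration---is the route actually taken in the sources \cite{MT} summarizes (Borel, Araki, Toda, Kono--Mimura). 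Your treatment of $G_2$ and your use of Borel's structure theorem, Kudo transgression, and Poincar\'e-series/duality bookkeeping to pin down the truncation heights are all appropriate ingredients once the correct inductive input is in place; just be aware that in a genuine proof the ``delicate points'' you flag (extension problems and Bockstein heights) occupy most of the work and are not disposed of by the dimension count alone.
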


\section{ Module Category Weight  with respect to $\Bbb{F}_{2}$ coefficients }  \label{sec-4}
\setcounter{thm}{0}
 \setcounter{equation}{0}

 Let  $\tilde{G}$ be  the 3-connected cover of $G$ which is the
homotopy fibre of the map $G\buildrel\iota\over\longrightarrow
K(Z,3)$ where $\iota$ is the fundamental class of  $H^{3}(G;Z)$.
Then we have the following   fibrations:  $CP^{\infty }\to
\tilde{G} \to G $,  $S^{1}\to \Omega \tilde{G} \to \Omega G $.

\begin{thm}{\rm \cite{KM,M}}
\label{conn1}
 The mod 2 cohomology of the 3-connected covers of the
exceptional Lie groups $\tilde{G}_2$, $\tilde{F}_4$,
$\tilde{E}_{6}$, $\tilde{E}_{7}$, and $\tilde{E}_{8}$ are as
follows.

\begin{eqnarray*}
H^*(\tilde{G}_2;\mathbb{F}_{2}) &\cong & \mathbb{F}_{2}[x_8]\otimes E(Sq^1x_8, Sq^{2,1}x_8)\\
H^*(\tilde{F}_4;\mathbb{F}_{2})&\cong & \mathbb{F}_{2}[x_8]\otimes E(Sq^1x_8,Sq^{2,1}x_8,Sq^{4,2,1}x_8,Sq^{8,4,2,1}x_8) \\
 H^*(\tilde{E}_{6};\mathbb{F}_{2}) &\cong & \mathbb{F}_{2}[x_{32}]\otimes
              E(x_9, Sq^{2} x_9,Sq^{4,2} x_9,Sq^{8} x_9,x_{23},Sq^{16,8} x_9)\\
 H^*(\tilde{E}_{7};\mathbb{F}_{2})&\cong &
\mathbb{F}_{2}[x_{32}]\otimes
E(x_{11},Sq^{4}x_{11},Sq^{8}x_{11},x_{23},Sq^{8,8}x_{11},Sq^1x_{32},Sq^{16,8}x_{11})
              \\
 H^*(\tilde{E}_{8};\mathbb{F}_{2})&\cong &
 \mathbb{F}_{2}[x_{15}]/(x_{15}^{4})\otimes \mathbb{F}_{2}[x_{32}] \\
 && \otimes
               E(x_{23},x_{27},x_{29},Sq^1x_{32},x_{35},Sq^{4}x_{35},Sq^{8,4}x_{35}).\\
\end{eqnarray*}

 \end{thm}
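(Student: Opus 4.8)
The plan is to read off $H^*(\tilde G;\mathbb{F}_2)$ for each $G\in\{G_2,F_4,E_6,E_7,E_8\}$ from \SER\ of the fibration $CP^\infty\to\tilde G\to G$. This fibration is the loop of the classifying map $\iota\colon G\to K(\mathbb{Z},3)$, hence the pullback of the path--loop fibration $CP^\infty=\Omega K(\mathbb{Z},3)\to PK(\mathbb{Z},3)\to K(\mathbb{Z},3)$ along $\iota$. Since $H^*(CP^\infty;\mathbb{F}_2)=\mathbb{F}_2[y_2]$ with $|y_2|=2$, the $E_2$-page is $E_2^{*,*}=H^*(G;\mathbb{F}_2)\otimes\mathbb{F}_2[y_2]$ with $H^*(G;\mathbb{F}_2)$ supplied by Theorem~\ref{mod2}. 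As $H^1(CP^\infty;\mathbb{F}_2)=0$, the class $y_2$ survives to $E_3$ and is transgressive, and pulling back the path--loop fibration gives $\tau(y_2)=x_3$. Because the fibre cohomology is generated by the single transgressive class $y_2$, the entire spectral sequence is forced by $\tau(y_2)=x_3$ together with the Leibniz rule and the Kudo transgression theorem. In particular $d_3$ is determined on every monomial $y_2^{a}u$, $u\in H^*(G;\mathbb{F}_2)$, and it makes $x_3,x_3^2,\dots$ into boundaries, as it must since $\tilde G$ is $3$-connected. (One could instead run \EM\ on the pullback square, but the Serre approach is the most direct, since both $H^*(G)$ and $H^*(CP^\infty)$ are explicit.)

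The higher differentials come from Kudo. From $Sq^{2}y_2=y_2^{2}$, $Sq^{4}y_2^{2}=y_2^{4}$, and so on, each power $y_2^{2^{k}}=Sq^{2^{k}}\cdots Sq^{4}Sq^{2}y_2$ is transgressive with $\tau(y_2^{2^{k}})=Sq^{2^{k}}\cdots Sq^{4}Sq^{2}x_3$, giving $d_{2^{k+1}+1}(y_2^{2^{k}})=Sq^{2^{k}}\cdots Sq^{4}Sq^{2}x_3$. The decisive point is that the admissibles $Sq^{2^{k}}\cdots Sq^{4}Sq^{2}x_3$ on the right are precisely the algebra generators lying over $x_3$ in $H^*(G;\mathbb{F}_2)$, so the fibre tower $\{y_2^{2^{k}}\}$ is played off degree by degree against this ``$x_3$-tower'' in the base. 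Let $N$ be the number of such generators (equivalently, one more than the largest $k$ for which $Sq^{2^{k}}\cdots Sq^{4}Sq^{2}x_3$ is indecomposable in $H^*(G;\mathbb{F}_2)$). The transgressions cancel the whole tower for $k<N$, after which $y_2^{2^{N}}$ survives, its would-be target $Sq^{2^{N}}\cdots Sq^{4}Sq^{2}x_3$ being zero or decomposable and hence already killed. This produces the new polynomial generator in degree $2^{N+1}$: namely $x_8=y_2^{4}$ for $G_2$ and $F_4$ (where $N=2$), and $x_{32}=y_2^{16}$ for $E_6,E_7,E_8$ (where $N=4$).

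The generators of $H^*(G;\mathbb{F}_2)$ outside the $x_3$-tower --- for instance $x_{15}$ and $Sq^8x_{15}$, the truncated $x_{15}$ with $x_{15}^4=0$ in $E_8$, and the top truncation classes $x_3^{\,h-1}$ where $x_3^{h}=0$ --- are permanent cycles that either survive directly or pair with surviving $y_2$-powers to give the remaining exterior and truncated-polynomial generators. Here the bookkeeping is delicate because the truncation heights vary: $x_3$ has height $4$ in $G_2,F_4,E_6,E_7$ but height $16$ in $E_8$, where moreover $Sq^2x_3$ and $Sq^{4,2}x_3$ have heights $8$ and $4$ and interact with the transgressions. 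Checking that a given $Sq^{2^{k}}\cdots Sq^{4}Sq^{2}x_3$ is genuinely indecomposable (so that its transgression fires) uses instability and the Adem relations, and once the tower ends one verifies that $H^{>0}(G;\mathbb{F}_2)$ has been exhausted in the relevant total degrees, so that no later differential can occur; a Poincar\'e-series comparison with the stated answer is then a collapse check.

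I expect the main obstacle to be the passage from $E_\infty$ back to $H^*(\tilde G;\mathbb{F}_2)$ as a Steenrod-algebra module, which is exactly the content of the presentations in the statement. Associated-graded survivors such as the degree-$9$ class $y_2^{2}Sq^2x_3$ and the degree-$11$ class $y_2x_3^{3}$ must be reinterpreted in the total space, where $y_2$ does not itself exist: for $G_2$ and $F_4$, whose polynomial generator is $x_8$, one must show these are $Sq^1x_8$ and $Sq^{2,1}x_8$, whereas for $E_6$, whose polynomial generator is $x_{32}$, the very same degree-$9$ survivor is instead an independent primitive generator $x_9$. (In $E_7$ and $E_8$ the truncation $(Sq^2x_3)^2\neq0$ kills this survivor outright, so it never appears.) Resolving the multiplicative extensions and rewriting every generator as an admissible Steenrod operation on the new polynomial class is where most of the case-by-case labour lies; I expect this, rather than the determination of the differentials, to require the most care.
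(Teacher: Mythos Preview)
The paper does not prove Theorem~\ref{conn1} at all: the attribution ``\cite{KM,M}'' in the theorem heading signals that the result is quoted from Kono--Mimura and Mimura, and no argument is supplied in the text. It is recorded purely as input for the later Cotor computations. So there is no ``paper's own proof'' against which to compare your proposal.

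That said, your outline is the correct one and is essentially what the cited references do: run \SER\ for $CP^{\infty}\to\tilde G\to G$, transgress $y_2\mapsto x_3$, propagate via Kudo to hit the successive $Sq^{2^{k}}\!\cdots Sq^{2}x_3$, and let the first surviving $y_2^{2^{N}}$ furnish the polynomial generator ($x_8$ for $G_2,F_4$; $x_{32}$ for $E_6,E_7,E_8$). Your identification of the delicate step is also accurate: passing from $E_\infty$ to the Steenrod presentation of $H^*(\tilde G;\mathbb{F}_2)$ requires resolving filtration jumps, e.g.\ showing that the degree-$9$ survivor $y_2^{2}x_5$ is $Sq^{1}x_8$ in $\tilde G_2,\tilde F_4$ but an independent $x_9$ in $\tilde E_6$. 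One small wording point: for $E_7$ and $E_8$ the class $y_2^{2}x_5$ is not ``killed'' so much as it fails to be a $d_5$-cycle (since $d_5(y_2^{2}x_5)=x_5^{2}\neq 0$ there), which is indeed why no degree-$9$ exterior generator appears; the analogous survivor sits higher up, at $y_2^{2}x_5^{\,h-1}$ where $h$ is the truncation height of $x_5$.

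If you intend to include a proof in your own write-up rather than cite \cite{KM,M}, the sketch is sound, but be prepared for the $E_8$ case to be substantially longer: the interplay of the height-$16$ truncation on $x_3$, the height-$8$ truncation on $x_5$, and the surviving truncated polynomial $\mathbb{F}_2[x_{15}]/(x_{15}^{4})$ makes the bookkeeping there the heaviest of the five.
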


To get the module category weight  of   exceptional Lie groups
$G$, we
 study
  {\EM} converging to
$H^{*}(G )$ with $ E_{2} \cong \Cot_{H^{*}(\Omega G ;\Bbb{F}_{2}
)}(\Bbb{F}_{2}, \Bbb{F}_{2})$.  This is a spectral sequence of
Hopf algebras but it depends on the coalgebra structure. So we
should determine the coalgebra structure of $H^{*}(\Omega
G;\Bbb{F}_{2})$. To get the coalgebra structure of $H^{*}(\Omega
G;\Bbb{F}_{2})$, we consider {\EM} converging to
 $H^{*}(\Omega G;\Bbb{F}_{2})$
with $$\begin{array}{rcl}
 E_{2}& \cong&
 {\Tor}_{H^{*}(G;\Bbb{F}_{2})}(\Bbb{F}_{2},\Bbb{F}_{2})
   \end{array} $$
\noindent  Since  $E_{2}$ concentrates in the even dimensions, the
spectral sequence
 collapses at the $E_{2}$-term, i.e., $E_{2}=E_{\infty }$. Then there is no coalgebra extension problem in such a spectral sequence \cite{K}.
 We refer the reader to \cite{Mc} for concise treatment of  above Eilenberg Moore spectral sequence. So as
 a coalgebra we have the following.

\begin{thm} The coalgebra structure of the mod 2 cohomology of the loop spaces
of  exceptional Lie groups $G_2$, $F_4$, $E_{6}$, $E_{7}$, and
$E_{8}$ are as follows.
\begin{eqnarray*}
H^*(\Omega G_2;\mathbb{F}_{2})&\cong & E(a_2) \otimes
                    \Gamma (a_4, b_{10})\\
                 H^*(\Omega F_4;\mathbb{F}_{2}) &\cong &
    E(a_2) \otimes \Gamma (a_4, b_{10}, a_{14}, a_{16}, a_{22})\\
 H^*(\Omega
E_6;\mathbb{F}_{2})&\cong &
 E(a_2) \otimes \Gamma (a_4, a_{8}, b_{10},  a_{14}, a_{16}, a_{22})\\
 H^*(\Omega E_7;\mathbb{F}_{2})&\cong &
     E(a_2, a_{4}, a_{8}) \otimes \Gamma (  b_{10},  a_{14}, a_{16}, b_{18}, a_{22}, a_{26},  b_{34}  )\\
 H^*(\Omega E_8;\mathbb{F}_{2})&\cong &
         E(a_2, a_{4}, a_{8}, a_{14} ) \otimes \Gamma (   a_{16},    a_{22}, a_{26}, a_{28},  b_{34}, b_{38}, b_{46},  b_{58}  )\\
        \end{eqnarray*}
\noindent especially we have  $Sq^{4}b_{10} =a_{14}$ and
$Sq^{8}b_{18} =a_{26}$ by Theorem \ref{conn1}.
 \end{thm}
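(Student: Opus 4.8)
The plan is to run the Eilenberg--Moore spectral sequence whose $E_2$-term is $\mathrm{Cotor}$ over $H^*(\Omega G;\mathbb{F}_2)$ (regarded as a coalgebra via the preceding paragraph), but since the statement we are proving is precisely the \emph{input} coalgebra itself, the real task is to compute $\mathrm{Tor}_{H^*(G;\mathbb{F}_2)}(\mathbb{F}_2,\mathbb{F}_2)$ and read off the coalgebra structure of $H^*(\Omega G;\mathbb{F}_2)$. So the first step is to take each cohomology ring from Theorem \ref{mod2} and resolve it as a (co)algebra. Recall the standard computation: for a truncated polynomial generator $x$ with $x^{2^k}=0$ one gets, in $\mathrm{Tor}$, an exterior class in homological degree $1$ together with a divided-power (polynomial-type) class of higher homological degree, while each exterior generator $E(x)$ in $H^*(G)$ contributes a single divided-power algebra $\Gamma$ on a class whose internal degree is shifted by $1$ (loop-space desuspension). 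Concretely I would invoke the classical formulas $\mathrm{Tor}_{\mathbb{F}_2[x]/(x^{2^k})}(\mathbb{F}_2,\mathbb{F}_2)\cong E(\sigma x)\otimes\Gamma(\phi)$ and $\mathrm{Tor}_{E(x)}(\mathbb{F}_2,\mathbb{F}_2)\cong\Gamma(\sigma x)$, where $\sigma$ lowers degree by $1$ and the divided-power generator $\phi$ sits in the appropriate bidegree.

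Next I would apply these term by term, using the tensor-product decompositions given in Theorem \ref{mod2}, since $\mathrm{Tor}$ of a tensor product of algebras is the tensor product of the $\mathrm{Tor}$'s (a K\"unneth statement that holds over the field $\mathbb{F}_2$). For instance, from $H^*(G_2;\mathbb{F}_2)\cong\mathbb{F}_2[x_3]/(x_3^4)\otimes E(Sq^2x_3)$ the factor $\mathbb{F}_2[x_3]/(x_3^4)$ produces an exterior generator $a_2$ (the desuspension of $x_3$) together with a divided-power generator $a_4$ (in homological degree $2$, internal degree $4$), and the factor $E(x_5)=E(Sq^2x_3)$ produces a divided-power generator $b_{10}$, yielding $E(a_2)\otimes\Gamma(a_4,b_{10})$; the same bookkeeping applied to $F_4,E_6,E_7,E_8$ should reproduce each displayed answer, where the truncated polynomial pieces in $H^*(E_7)$ and $H^*(E_8)$ contribute the extra exterior generators $a_4,a_8,a_{14}$ and the higher divided-power classes. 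Since all these $E_2$-terms are concentrated in even total degree (as the paragraph preceding the statement observes), the spectral sequence collapses and there is no coalgebra extension problem, so the associated graded coalgebra is the whole coalgebra.

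Finally I would pin down the two stated Steenrod operations $Sq^4 b_{10}=a_{14}$ and $Sq^8 b_{18}=a_{26}$. For these I would appeal to Theorem \ref{conn1}: the cohomology of the $3$-connected cover $\tilde G$ carries explicit Steenrod action (classes such as $Sq^1x_8,Sq^{2,1}x_8$, etc.), and via the fibration $S^1\to\Omega\tilde G\to\Omega G$ together with the cohomology suspension $\sigma$, which commutes with the Steenrod squares, the action on $H^*(\Omega G;\mathbb{F}_2)$ is transported from the action on $H^*(G;\mathbb{F}_2)$ recorded in Theorem \ref{mod2}. Tracking that $b_{10}$ desuspends from a degree-$11$ generator and $a_{14}$ from $Sq^4$ of it (and similarly $b_{18}\mapsto a_{26}$ under $Sq^8$) gives the asserted relations.

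The main obstacle I expect is not the formal $\mathrm{Tor}$ computation, which is mechanical, but correctly matching the divided-power versus exterior labelling across all five groups and ensuring the internal degrees and homological filtrations line up so that the generators $a_i$ (exterior or $\Gamma$) and $b_j$ (the $\Gamma$-classes coming from genuinely higher homological degree) land exactly in the degrees displayed; in particular the cases $E_7$ and $E_8$, where $H^*(G;\mathbb{F}_2)$ already contains higher truncated polynomial factors, require care to see which classes become exterior in the loop space and which remain of divided-power type, and to verify that the even-degree concentration indeed forces collapse with no hidden coalgebra extensions.
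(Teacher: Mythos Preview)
Your approach is essentially the paper's: compute $\mathrm{Tor}_{H^*(G;\mathbb{F}_2)}(\mathbb{F}_2,\mathbb{F}_2)$ factor by factor, observe even total degree forces collapse at $E_2$ with no coalgebra extension, and extract the Steenrod action from Theorem~\ref{conn1} via the fibration $S^1\to\Omega\tilde G\to\Omega G$.

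There is one bookkeeping slip in your worked $G_2$ example, and it is exactly the kind you warned yourself about. The factor $\mathbb{F}_2[x_3]/(x_3^4)$ contributes $E(a_2)$ in bidegree $(-1,3)$ together with a divided-power generator in bidegree $(-2,12)$, hence total degree $10$: this is $b_{10}$, not $a_4$. The class $a_4$ is $\sigma x_5$ coming from the exterior factor $E(Sq^2x_3)$. Getting this right matters downstream: $b_{10}$ is the transpotence-type class which, under the map $H^*(\Omega G)\to H^*(\Omega\tilde G)$, corresponds to $\sigma x_{11}$ (with $x_{11}=Sq^{2,1}x_8$ in $H^*(\tilde F_4)$), and it is the relation $Sq^4x_{11}=Sq^{4,2,1}x_8=x_{15}$ in Theorem~\ref{conn1} that yields $Sq^4b_{10}=a_{14}$. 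With that correction your outline is sound and coincides with the paper's argument.
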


Now we consider {\EM}   converging to $H^{*}(G;\Bbb{F}_{2})$ with

\setcounter{equation}{\value{thm}}
\begin{equation}
\label{cotor} E_{2} \cong  \Cot_{H^{*}(\Omega
G;\Bbb{F}_{2}})(\Bbb{F}_{2}, \Bbb{F}_{2}).
\end{equation}
\setcounter{thm}{\value{equation}}

\noindent Then we get the following theorem by   the formal Cotor
computation. We refer the reader to \cite{MS} for detail
computation method  of this spectral sequence.

\begin{thm}
\label{thmcot}
 $\Cot_{H^{*}(\Omega G;\Bbb{F}_{2})}(\Bbb{F}_{2},
\Bbb{F}_{2})$  of the exceptional Lie groups G  for $G_2$, $F_4$,
$E_{6}$, $E_{7}$, and $E_{8}$ are as follows.
\begin{eqnarray*}
\Cot_{H^{*}(\Omega G_2;\Bbb{F}_{2})}(\Bbb{F}_{2},
\Bbb{F}_{2}) &\cong & \mathbb{F}_{2}[x_3] \otimes E( x_5, z_{11})\\
\Cot_{H^{*}(\Omega F_4;\Bbb{F}_{2})}(\Bbb{F}_{2}, \Bbb{F}_{2})
&\cong & \mathbb{F}_{2}[x_3]
\otimes E( x_5, z_{11}, x_{15}, x_{23}) \\
\Cot_{H^{*}(\Omega E_6;\Bbb{F}_{2})}(\Bbb{F}_{2}, \Bbb{F}_{2})
&\cong &
      \mathbb{F}_{2}[x_3]\otimes
          E(x_5, x_9, z_{11}, x_{15}, x_{17} ,x_{23})\\
 \Cot_{H^{*}(\Omega E_7;\Bbb{F}_{2})}(\Bbb{F}_{2}, \Bbb{F}_{2}) &\cong &
\mathbb{F}_{2}[x_3, x_5,  x_{9}]
                       \otimes E(z_{11}, x_{15}, x_{17}, z_{19}, x_{23},  x_{27}, z_{35})\\
 \Cot_{H^{*}(\Omega E_8;\Bbb{F}_{2})}(\Bbb{F}_{2}, \Bbb{F}_{2}) &\cong &
\mathbb{F}_{2}[x_3, x_{5}, x_9 ,x_{15}]\otimes E( x_{17},   x_{23}, x_{27}, x_{29},  z_{35}, z_{39}, z_{47},z_{59}    )\\
\end{eqnarray*}
especially we have  $Sq^{4}z_{11} =x_{15}$ and $Sq^{8}z_{19}
=x_{27}$.
\end{thm}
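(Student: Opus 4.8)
The plan is to compute $\Cot_{H^{*}(\Omega G;\Bbb{F}_{2})}(\Bbb{F}_{2},\Bbb{F}_{2})$ by means of the cobar construction, using that $\Cot$ is multiplicative over a field: if $C\cong C'\otimes C''$ as coalgebras then $\Cot_{C}(\Bbb{F}_{2},\Bbb{F}_{2})\cong \Cot_{C'}(\Bbb{F}_{2},\Bbb{F}_{2})\otimes \Cot_{C''}(\Bbb{F}_{2},\Bbb{F}_{2})$. By the preceding theorem every $H^{*}(\Omega G;\Bbb{F}_{2})$ is, as a coalgebra, a tensor product of exterior coalgebras $E(a)$ on primitive generators $a$ of even degree and of divided power coalgebras $\Gamma(b)$. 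Hence it is enough to evaluate $\Cot$ on each of these two kinds of factor and then reassemble by the K\"unneth formula.

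First I would record the two building-block computations. For a primitive generator $a$ the reduced diagonal of $E(a)$ is zero, so in the cobar complex the class $[a]$ is a permanent cycle supporting no differential; the cobar algebra is then the tensor, hence polynomial, algebra on the single suspension class $\sigma a$, giving $\Cot_{E(a)}(\Bbb{F}_{2},\Bbb{F}_{2})\cong \Bbb{F}_{2}[\sigma a]$ with $|\sigma a|=|a|+1$. For a divided power factor I would dualize: since $\Gamma(b)^{\vee}\cong \Bbb{F}_{2}[b^{\vee}]$, one has $\Cot_{\Gamma(b)}(\Bbb{F}_{2},\Bbb{F}_{2})\cong \Tor_{\Bbb{F}_{2}[b^{\vee}]}(\Bbb{F}_{2},\Bbb{F}_{2})\cong E(\sigma b)$ by the Koszul resolution, again with $|\sigma b|=|b|+1$. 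Reassembling factor by factor, the exterior generators $a_{2},a_{4},\dots$ produce polynomial generators $x_{3},x_{5},\dots$ and the divided power generators produce exterior generators, each raised by one in degree; reading off the degrees from the preceding theorem then yields the displayed algebras.

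It remains to identify the two Steenrod operations $Sq^{4}z_{11}=x_{15}$ and $Sq^{8}z_{19}=x_{27}$. The four classes involved all sit in cobar filtration one, that is, they are the suspensions $z_{11}=\sigma b_{10}$, $x_{15}=\sigma a_{14}$, $z_{19}=\sigma b_{18}$ and $x_{27}=\sigma a_{26}$. Since the suspension $\sigma$, which raises degree by one, commutes with the Steenrod operations, the relations $Sq^{4}b_{10}=a_{14}$ and $Sq^{8}b_{18}=a_{26}$ recorded in the previous theorem carry over at once: $Sq^{4}z_{11}=Sq^{4}\sigma b_{10}=\sigma Sq^{4}b_{10}=\sigma a_{14}=x_{15}$, and likewise $Sq^{8}z_{19}=\sigma Sq^{8}b_{18}=\sigma a_{26}=x_{27}$.

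The algebra is entirely formal once the coalgebra structure is in hand, so the only genuine point of care is the Steenrod computation. I expect the main obstacle to be justifying that the Steenrod action on the filtration-one generators of $\Cot$ is controlled by the cohomology suspension, so that it is governed precisely by the action on the generators $b_{10}$ and $b_{18}$ of $H^{*}(\Omega G;\Bbb{F}_{2})$ supplied by Theorem \ref{conn1}. A secondary point is to check that each displayed coalgebra really splits as a tensor product of primitively generated exterior pieces and divided power pieces, so that the K\"unneth formula for $\Cot$ applies termwise.
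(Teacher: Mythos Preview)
Your proposal is correct and is essentially the same approach as the paper, only spelled out in more detail: the paper merely calls this a ``formal Cotor computation'' and refers to \cite{MS}, which amounts exactly to your K\"unneth splitting together with the two building blocks $\Cot_{E(a)}\cong\Bbb F_2[\sigma a]$ and $\Cot_{\Gamma(b)}\cong E(\sigma b)$. Your treatment of the Steenrod relations via commutation with the suspension on filtration-one classes is likewise the intended reading of the paper's remark that $Sq^4 b_{10}=a_{14}$ and $Sq^8 b_{18}=a_{26}$ force $Sq^4 z_{11}=x_{15}$ and $Sq^8 z_{19}=x_{27}$.
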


\noindent Then from information Theorem \ref{mod2} of
$H^{*}(G;\Bbb{F}_{2})$, we can analyze non trivial differentials
of  {\EM}  ( \ref{cotor} ) converging to $H^{*}(G;\Bbb{F}_{2})$
as follows:

\setcounter{equation}{\value{thm}}
\begin{equation}
\label{diff0}
\begin{array}{rcl}
d_{3}(  z_{11}  )& = & x_{3}^{4}\quad \mbox{ for } G=  G_2, F_4, E_{6}, E_{7}\\
d_{3}(  z_{19}  )& = & x_{5}^{4}\quad \mbox{ for } G=  E_{7}\\
d_{3}(  z_{35}  )& = & x_{9}^{4}\quad \mbox{ for } G=   E_{7}, E_{8} \\
d_{7}(  z_{39}  )& = & x_{5}^{8}\quad \mbox{ for } G=  E_{8} \\
d_{15}(  z_{47}  )& = & x_{3}^{16}\quad \mbox{for } G=  E_{8} \\
d_{3}(  z_{59}  )& = & x_{15}^{4}\quad  \mbox{for } G=  E_{8}.\\
         \end{array}
\end{equation}
\setcounter{thm}{\value{equation}}

 Next as
in \cite{I,IK}, truncating the above computation with the same
differential $d_{i}$ in (\ref{diff0} ), we can compute   the
spectral sequence of Stasheff's type converging to $H^{*}(P^{m} (
\Omega G);\Bbb{F}_{2})$.
 Let $A= H^{*}(G;\Bbb{F}_{2})$
in Theorem \ref{mod2}. Then like the result in \cite [ Proposition
2.1]{IK}, for low $m$ such as $1 \leq m \leq 3$, we have the
following:

\setcounter{equation}{\value{thm}}
\begin{equation}
\label{stas} H^{*}(P^{m}(\Omega
   G);\Bbb{F}_{2})=A^{[m]}\oplus \sum_{i} z_{4i+3}\cdot A^{[m-1]}\oplus
   S_{m},\left\{ \begin{array}{ll}
   i=3,  \mbox { for } G= G_2, F_4, E_{6} \\
   i=3,4,8,  \mbox { for } G= E_{7} \\
     i=8,9,11,14, \mbox { for }  G= E_{8}
  \end{array}
  \right.
   \end{equation}
\setcounter{thm}{\value{equation}}

\noindent as modules where $A^{[m]}, (m\geq 0)$ denotes the
quotient module $A/D^{m+1}(A)$ of $A$ by the submodule $D^{m+1}(A)
\subseteq A$ generated by all the products of $m+1$ elements in
positive dimensions in $A$, $z_{4i+3}\cdot A^{[m-1]}$ denotes a
submodule corresponding to a submodule in $A\otimes E(z_{4i+3})$
and $S_{m}$ satisfies $ S_{m}\cdot {\tilde H}^{*}(P^{m}(\Omega
G);\Bbb{F}_{2}) = 0$ and $S_{m}|_{P^{m-1}(\Omega G)} = 0$. For
more detail for $S_{m}$, we refer the paper \cite{I}. Now we
compute the module category weight using the similar method in
\cite{C,IK}.

\begin{thm}
\label{thm3-6} The module category weight is as follows:
\begin{eqnarray*}
Mwgt( G_2 ;\Bbb{F}_{2}) & \geq  & 4, \\
Mwgt( F_4 ;\Bbb{F}_{2}) & \geq  & 8, \\
Mwgt( E_6 ;\Bbb{F}_{2}) & \geq  & 10, \\
Mwgt( E_7 ;\Bbb{F}_{2}) & \geq  & 15, \\
Mwgt( E_8 ;\Bbb{F}_{2}) & \geq  & 32 . \\
        \end{eqnarray*}
        \setcounter{thm}{\value{equation}}
   \end{thm}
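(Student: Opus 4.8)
The plan is to establish each lower bound by exhibiting, for the relevant value of $m$, a Steenrod-module generator of $H^{*}(G;\Bbb{F}_{2})$ that survives to $P^{m}(\Omega G)$ but cannot survive to $P^{m-1}(\Omega G)$, so that $(e_{m})^{*}$ fails to be a split monomorphism of Steenrod-algebra modules for that smaller index. Concretely, $Mwgt(G;\Bbb{F}_{2})\geq N$ will follow if I show that the map $(e_{N-1})^{*}\colon H^{*}(G;\Bbb{F}_{2})\to H^{*}(P^{N-1}(\Omega G);\Bbb{F}_{2})$ is not a split monomorphism of Steenrod modules. The key structural input is the description (\ref{stas}) of $H^{*}(P^{m}(\Omega G);\Bbb{F}_{2})$ as $A^{[m]}\oplus\sum_{i}z_{4i+3}\cdot A^{[m-1]}\oplus S_{m}$, where $A=H^{*}(G;\Bbb{F}_{2})$ and $A^{[m]}=A/D^{m+1}(A)$. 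The point is that $A^{[m]}$ only sees products of length at most $m$, so a top class in $A$ realized as a product of exactly $N$ positive-degree generators is killed in $A^{[m]}$ once $m<N$, while the contribution $z_{4i+3}\cdot A^{[m-1]}$ and the annihilator-type summand $S_{m}$ cannot supply a Steenrod-module splitting for it.

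First I would read off from Theorem \ref{mod2} the maximal length of a nonzero product of positive-degree generators in each $A=H^{*}(G;\Bbb{F}_{2})$: for $G_2$ the truncated polynomial generator $x_3$ with $x_3^{4}=0$ forces a longest surviving product of length $3$ in the polynomial part, but combined with $Sq^2x_3$ one gets the relevant cup length, and the target bound $4$ comes from the first Stasheff stage at which the class $x_3^{3}\cdot Sq^2 x_3$ (or the appropriate top product) detaches. In the same way, $F_4$ has $x_3^{3}$ together with three exterior generators giving length $8$; $E_6$ adds the factors $Sq^{4,2}x_3,\ Sq^{8,4,2}x_3$ to reach $10$; $E_7$ uses the three truncated polynomial generators $x_3,Sq^2x_3,Sq^{4,2}x_3$ each of height $4$ together with four exterior classes, and $E_8$ uses the heights $16,8,4,4$ of its four polynomial generators. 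In each case I would locate the \emph{unique} top-degree monomial (the fundamental class of $G$), identify its length $N$ as a product of indecomposables, and observe that this is exactly the asserted bound.

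The mechanism for the bound is then the following. Since the top product of length $N$ lies in $D^{N}(A)$ but the quotient $A^{[N-1]}=A/D^{N}(A)$ kills everything in $D^{N}(A)$, this class maps to zero in the $A^{[N-1]}$ summand of $H^{*}(P^{N-1}(\Omega G);\Bbb{F}_{2})$. I must rule out its recovery through the other two summands: the part $z_{4i+3}\cdot A^{[N-2]}$ lives in the image of the suspension-type classes $z_{4i+3}$ and, by the differentials in (\ref{diff0}) together with the defining relations $z_{4i+3}\cdot\tilde H^{*}=0$-type behavior, cannot carry a splitting of a decomposable top class compatibly with the $Sq^{i}$-action recorded in Theorem \ref{mod2}; and the summand $S_{m}$ is annihilated by $\tilde H^{*}(P^{m}(\Omega G))$ and restricts trivially, so it offers no Steenrod-module complement either. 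Thus no Steenrod-linear splitting of $(e_{N-1})^{*}$ exists, giving $Mwgt(G;\Bbb{F}_{2})\geq N$. The larger bounds $15$ and $32$ for $E_7,E_8$ are reached by the same count, using that the surviving $z$-classes there are $z_{11},z_{19},z_{35}$ and $z_{35},z_{39},z_{47},z_{59}$ respectively, which is why the index set of $i$ in (\ref{stas}) changes with $G$.

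The hard part will be the final summand-by-summand verification that the top class cannot be split off through $z_{4i+3}\cdot A^{[m-1]}\oplus S_{m}$ \emph{as a module over the Steenrod algebra}: this is where the explicit $Sq^{i}$-action on the generators (the relations $Sq^{4}z_{11}=x_{15}$, $Sq^{8}z_{19}=x_{27}$ of Theorem \ref{thmcot}, and the analogous actions in Theorem \ref{mod2}) must be used to obstruct every candidate splitting, rather than merely a linear one. I would carry this out for each group by fixing the degree of the top class, writing down the finite set of classes in $H^{*}(P^{m}(\Omega G);\Bbb{F}_{2})$ of that degree, and checking that any $\Bbb{F}_2$-linear section is incompatible with at least one Steenrod square; the bookkeeping is routine once (\ref{stas}) and the Steenrod actions are in hand, following the template of \cite{C,IK}.
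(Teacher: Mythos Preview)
Your proposal contains a genuine gap: the mechanism you describe would only recover the category weight $wgt(G;\Bbb{F}_{2})$, not the stronger module-category-weight bounds claimed. Concretely, the length of the top monomial in $H^{*}(F_4;\Bbb{F}_{2})=\Bbb{F}_{2}[x_3]/(x_3^4)\otimes E(x_5,x_{15},x_{23})$ is $3+1+1+1=6$, not $8$; similarly for $E_6$ it is $8$ (not $10$) and for $E_7$ it is $13$ (not $15$). Your assertion that ``this is exactly the asserted bound'' is simply false for $F_4,E_6,E_7$, and the product-length argument you outline yields precisely $wgt$, which is the table's first column, not the second. The summand analysis you sketch (``the top class is killed in $A^{[N-1]}$, and cannot be recovered from the other summands'') is the standard argument that $(e_{m})^{*}$ fails to be a monomorphism for $m<wgt$; it never exploits the Steenrod-module requirement and cannot push past $wgt$.

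What the paper actually does for $F_4,E_6,E_7$ is different in kind. One does not look at the top class directly but at a class in $H^{*}(P^{N-1}(\Omega G);\Bbb{F}_{2})$ of the form $w=(\text{length }N-2\text{ monomial in }A)\cdot z_{11}$, lying in the summand $z_{11}\cdot A^{[m-1]}$. Using $Sq^{4}z_{11}=x_{15}$ (Theorem~\ref{thmcot}) one shows $Sq^{4}w=(e_{N-1})^{*}(\text{top class})$. If a Steenrod-module retraction $\phi_{N-1}$ existed, then $\phi_{N-1}(Sq^{4}w)=Sq^{4}\phi_{N-1}(w)$; but $\phi_{N-1}(w)$ lies in $H^{*}(G)$ in a degree four below the top, where (by Poincar\'e duality, since $H^{4}(G;\Bbb{F}_{2})=0$) there is nothing, so $\phi_{N-1}(w)=0$ and hence $Sq^{4}\phi_{N-1}(w)=0\neq\text{top class}$. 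This contradiction rules out $\phi_{N-1}$ and yields $Mwgt\geq N$. The ``extra'' $+2$ over $wgt$ comes precisely from replacing $x_{15}$ by $z_{11}$ and then hitting the top class via $Sq^{4}$; your outline never isolates this step. For $G_2$ and $E_8$ no such $Sq^{i}z_{-}=x_{-}$ relation is available for degree reasons, so the paper simply invokes $Mwgt\geq wgt$ there, which is also why those two rows show no improvement.
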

   \begin{proof}

 From Theorem \ref{thmcot},
 $Sq^{4}z_{11} =x_{15}$
    in $H^{*}(P^{1}(\Omega
   G);\Bbb{F}_{2})$ for $G=F_4, E_6, E_7$.
Then from (\ref{stas}),  $Sq^{4}z_{11} =x_{15}$ modulo $S_{2}$
     in $H^{*}(P^{2}(\Omega
   G);\Bbb{F}_{2})$ for $G=F_4, E_6, E_7$. Since  $S_{2}$ is
   even-dimensional \cite{C,IK}, the  modulo $S_{2}$ is trivial so  $Sq^{4}z_{11} =x_{15}$
      in $H^{*}(P^{2}(\Omega G);\Bbb{F}_{2})$.   Therefore  in $H^{*}
(P^{7}(\Omega
  F_4 );\Bbb{F}_{2})$, $H^{*}
(P^{9}(\Omega
  E_{6});\Bbb{F}_{2})$, and  $H^{*}(P^{14}(\Omega E_{7});\Bbb{F}_{2}) $, we have
\begin{eqnarray*} \label{module}
 Sq^{4}( x_{3}^{3}x_{5}z_{11}x_{23})& = &  x_{3}^{3}x_{5}x_{15}x_{23} \mbox{ for }  F_4, \\
 Sq^{4}( x_{3}^{3}x_{5}x_{9}z_{11}x_{17}x_{23}) & = & x_{3}^{3}x_{5}x_{9}x_{15}x_{17}x_{23} \mbox{ for }  E_6, \\
 Sq^{4}( x_{3}^{3}x_{5}^{3}x_{9}^{3}z_{11}x_{17}x_{23}x_{27}
   ) & = & x_{3}^{3}x_{5}^{3}x_{9}^{3}x_{15}x_{17}x_{23}x_{27} \mbox{ for }  E_7. \\
        \end{eqnarray*}
\noindent By the definition in section 2, $Mwgt(X;\Bbb{F}_{2})$ is
the least $m$ such that $(e_{m})^{*}$
 is a split monomorphism of all Steenrod algebra module.  Let  $\phi_{m}: H^{*}(P^{m}( \Omega G);\Bbb{F}_{2})
\rightarrow H^{*}(G;\Bbb{F}_{2})$ be a epimorphism which preserves
all Steenrod actions and $\phi_{m} \circ (e_{m})^{*}\cong 1_{
H^{*}(G;\Bbb{F}_{2})}$. Suppose that there are epimorphisms
\begin{eqnarray*} \label{map}
 \phi_{7}:H^{*}
(P^{7}(\Omega
  F_4 );\Bbb{F}_{2}) & \rightarrow &  H^{*}(F_4 ;\Bbb{F}_{2}), \\
  \phi_{9}:H^{*}
(P^{9}(\Omega
  E_{6});\Bbb{F}_{2}) & \rightarrow &  H^{*}(E_6 ;\Bbb{F}_{2}), \\
 \phi_{14}:H^{*}
(P^{14}(\Omega
  E_{7});\Bbb{F}_{2}) & \rightarrow &  H^{*}(E_7 ;\Bbb{F}_{2}). \\
        \end{eqnarray*}

   \noindent  Then we have  the following
   diagrams:
   \setcounter{equation}{\value{thm}}
\begin{equation}
\label{modu}
\begin{array}{ccccc}
H^{*} (P^{7}(\Omega
  F_4 );\Bbb{F}_{2})  &\stackrel{\phi_{7}  }{\longrightarrow}
&   H^{*}(F_4 ;\Bbb{F}_{2})  \\
   x_{3}^{3}x_{5}x_{15}x_{23}  &\stackrel{}{\longmapsto}
&  x_{3}^{3}x_{5}x_{15}x_{23}  \\
 \stackrel{}{ Sq^{4} }{\uparrow } & &\stackrel{}{ Sq^{4}} \uparrow  \\
x_{3}^{3}x_{5}z_{11}x_{23}  &\stackrel{}{\longmapsto}
& \qquad  0 \\
& & \\
 H^{*} (P^{9}(\Omega
  E_{6});\Bbb{F}_{2})  &\stackrel{\phi_{9}  }{\longrightarrow}
&  H^{*}(E_6 ;\Bbb{F}_{2})  \\
   x_{3}^{3}x_{5}x_{9}x_{15}x_{17}x_{23} &\stackrel{}{\longmapsto}
&  x_{3}^{3}x_{5}x_{9}x_{15}x_{17}x_{23}  \\
\stackrel{}{ Sq^{4}}{\uparrow } & &\stackrel{}{ Sq^{4}} \uparrow  \\
 x_{3}^{3}x_{5}x_{9}z_{11}x_{17}x_{23} &\stackrel{}{\longmapsto}
& \qquad   0 \\
& & \\
H^{*} (P^{14}(\Omega
  E_{7});\Bbb{F}_{2})  &\stackrel{\phi_{14}  }{\longrightarrow}
&  H^{*}(E_7 ;\Bbb{F}_{2})  \\
   x_{3}^{3}x_{5}^{3}x_{9}^{3}x_{15}x_{17}x_{23}x_{27}  &\stackrel{}{\longmapsto}
&  x_{3}^{3}x_{5}^{3}x_{9}^{3}x_{15}x_{17}x_{23}x_{27}  \\
\stackrel{}{ Sq^{4}}{\uparrow } & &\stackrel{}{ Sq^{4}} \uparrow  \\
 x_{3}^{3}x_{5}^{3}x_{9}^{3}z_{11}x_{17}x_{23}x_{27} &\stackrel{}{\longmapsto}
& \qquad   0 \\
\end{array}
\end{equation}
\setcounter{thm}{\value{equation}}

\noindent Obviously this is a contradiction. So  $\phi_{7}$,
$\phi_{9}$, and $\phi_{14}$ are not  epimorphisms. This means that
$(e_{7})^{*}$, $(e_{9})^{*}$, and $(e_{14})^{*}$
 can not be   split monomorphisms of all Steenrod algebra module. Hence we obtain that

 $$Mwgt( F_4   ; \Bbb{F}_{2} ) \geq  8,  \,\,\,  Mwgt( E_{6}   ; \Bbb{F}_{2} ) \geq  10,  \,\,\,    Mwgt( E_{7}   ; \Bbb{F}_{2} ) \geq  15 \, .$$

\noindent Now we consider the category weight. For $G_{2}$,
$x_{3}^{3}x_{5}\in H^{*} (P^{4}(\Omega
  G_2 );\Bbb{F}_{2} )$. Hence $(e_{4})^{*}$ is a monomorphism, so  $ wgt( G_{4} ;\Bbb{F}_{2})= 4 $.
  By the same way,  $ wgt( G ;\Bbb{F}_{2})$ is 6 for $G=F_4 $, 8 for $G=E_6 $, 13 for $G=E_7 $, 32
for $G=E_8 $.   In fact  the category weight is the same as the
Toomer's invariant, the filtration length,  that is,
$wgt(G;\Bbb{F}_{2})=f_{2}(G)$ in \cite{T}.

 For the case of $G_{2}$ and $E_{8}$, by dimensional
reason,
  any generator of type  $x_{-}$ can not be of the form
   $Sq^{i}( z_{-} )$ for any $i$ and for any generator of type
   $z$. So  we can not apply the method in (\ref{modu}). Hence  we do not obtain  any positive difference between the category
   weight and the module category weight.
Hence  we have
$$ Mwgt( G_2 ;\Bbb{F}_{2})\geq wgt( G_2 ;\Bbb{F}_{2}) = 4, \, \, \, Mwgt( E_8 ;\Bbb{F}_{2})\geq wgt( E_8 ;\Bbb{F}_{2}) = 32  \, .$$
\end{proof}
\noindent Summarizing above results, we have the following
results.

\bigskip
 \centerline{
 \vbox{\offinterlineskip
\tabskip=0pt \halign {
   \strut   \vrule#&\quad # \quad &  \vrule#&\quad # \quad &
    \vrule#& \quad  # \qquad &  \vrule#& \quad # &  \vrule#& #\cr
\noalign{\hrule} & X &&  $ wgt(X;\Bbb{F}_{2})$&&
$Mwgt(X;\Bbb{F}_{2}) $ & & $cat(X) $ & \cr
 \noalign{\hrule} &  $G_2$ &&
 4
&& $\geq $  4 &&4 &\cr \noalign{\hrule} & $F_4$ &&
 6
&& $\geq $ 8 &&?&\cr \noalign{\hrule} & $E_{6}$ &&
 8
&& $\geq $ 10 && ?&\cr
 \noalign{\hrule} & $E_{7}$ &&
 13
&& $ \geq $ 15 &&?&\cr \noalign{\hrule} &  $E_{8}$ &&
 32
&& $\geq $ 32 && ? &\cr
 \noalign{\hrule} }   }  }

\smallskip

\section{ Module Category Weight   with respect to $\Bbb{F}_{3}$ coefficients }  \label{sec-4}
\setcounter{thm}{0}
 \setcounter{equation}{0}

Now we turn to  the case of  $\Bbb{F}_{3}$ coefficients.

\begin{thm}{\rm \cite{ KM,M}}
\label{conn2} The mod 3 cohomology of the 3-connected covers of
the exceptional Lie groups $\tilde{G}_2$, $\tilde{F}_4$,
$\tilde{E}_{6}$, $\tilde{E}_{7}$, and $\tilde{E}_{8}$ are as
follows.

\begin{eqnarray*}
H^*(\tilde{G}_2;\mathbb{F}_{3}) &\cong & \mathbb{F}_{3}[y_6]\otimes E(x_{11}, \beta  y_6)\\
H^*(\tilde{F}_4;\mathbb{F}_{3})&\cong & \mathbb{F}_{3}[y_{18}]\otimes E( x_{11},{\mathcal P}^1x_{11},\beta y_{18}, {\mathcal P}^1 \beta y_{18} )  \\
 H^*(\tilde{E}_{6};\mathbb{F}_{3}) &\cong & \mathbb{F}_{3}[y_{18}]\otimes
      E(x_9,x_{11},{\mathcal P}^1x_{11},x_{17}, \beta y_{18}, {\mathcal P}^1 \beta y_{18}  )        \\
 H^*(\tilde{E}_{7};\mathbb{F}_{3})&\cong &
\mathbb{F}_{3}[y_{54}]\otimes
      E(x_{11},{\mathcal P}^1x_{11},x_{19}, {\mathcal P}^1  x_{19}, {\mathcal P}^2 x_{19}, \beta y_{54} ) \\
 H^*(\tilde{E}_{8};\mathbb{F}_{3})&\cong &
\mathbb{F}_{3}[y_{54}]\otimes
      E(x_{15},z_{23}, {\mathcal P}^{1}z_{23},
      x_{35},x_{39},x_{47},  \beta y_{54}, y_{59} ).  \\
\end{eqnarray*}
 \end{thm}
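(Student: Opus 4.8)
The plan is to read off $H^*(\tilde G;\mathbb{F}_{3})$ from the Serre spectral sequence of the fibration $CP^{\infty}\to\tilde G\to G$ recorded just before Theorem \ref{conn1}, using $H^*(G;\mathbb{F}_{3})$ from Theorem \ref{mod3} and $H^*(CP^{\infty};\mathbb{F}_{3})=\mathbb{F}_{3}[t]$ with $|t|=2$ as input. The $E_2$-page is $E_2^{*,*}=H^*(G;\mathbb{F}_{3})\otimes\mathbb{F}_{3}[t]$. Since $G$ is $2$-connected and $\tilde G$ is $3$-connected, the class $t\in H^2(CP^{\infty})$ cannot survive, and the only possibility is the transgression $d_3(t)=x_3$, which simultaneously kills $t$ and the fundamental class $x_3\in H^3(G)$. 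This single differential, together with the multiplicative structure and the Steenrod action, drives the whole computation.

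The engine is the Kudo transgression theorem. Since $\mathcal P^1$ sends a $2$-dimensional class to its cube, one has $t^{3}=\mathcal P^1 t$, $t^{9}=\mathcal P^3 t^{3}$, and so on, and transgression commutes with the $\mathcal P^i$, so $t^{3^k}$ transgresses to the iterated operation $\mathcal P^{\cdots}x_3$. I would run down this tower: $t^{3^k}$ dies as long as the corresponding class $\mathcal P^{\cdots}x_3\in H^*(G;\mathbb{F}_{3})$ is nonzero, and the first power whose image vanishes becomes a permanent cycle and yields the polynomial generator $y_{2\cdot 3^k}=t^{3^k}$. This explains $y_6=t^{3}$ for $\tilde G_2$, $y_{18}=t^{9}$ for $\tilde F_4,\tilde E_6$, and $y_{54}=t^{27}$ for $\tilde E_7,\tilde E_8$ (with $\mathcal P^{9,3,1}x_3$ of degree $55$ vanishing), matching the degrees $6,18,54$. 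Taking $G_2$ as the model: after $d_3$ the subalgebra $\mathbb{F}_{3}[t]\otimes E(x_3)$ leaves exactly $\mathbb{F}_{3}[t^3]\langle 1, t^2 x_3\rangle$, so besides $y_6=t^3$ one finds the \emph{transpotence} class $t^2 x_3$ of degree $7$, which represents $\beta y_6$; since $(t^2 x_3)^2=0$ this contributes an exterior generator. Together with the untouched class $x_{11}$ this already gives $\mathbb{F}_{3}[y_6]\otimes E(\beta y_6,x_{11})$ with no room for further differentials, i.e. $E_4=E_{\infty}$.

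For each group the same bookkeeping would be carried out: the entire $\mathcal P$- and $\beta$-tower on $x_3$ inside $H^*(G;\mathbb{F}_{3})$ is wiped out by the transgressions of $t,t^3,t^9,\dots$; the generators of $H^*(G;\mathbb{F}_{3})$ lying outside this tower (the $x_9,x_{11},\mathcal P^1 x_{11},x_{17},\dots$) survive; and the transpotence $\beta y$ with its Steenrod images $\mathcal P^i\beta y$ appears as new exterior generators. Finally I would resolve the algebra extension problems, checking that the surviving transpotence classes square to zero and that no hidden multiplicative relations occur, in order to promote the $E_{\infty}$-page to the stated Hopf-algebra answer.

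The hard part will be the two largest groups. For $\tilde E_7$ and $\tilde E_8$ the transgressive tower is longer (reaching $t^{27}$), the base cohomology is bigger, and the surviving classes reorganize into fresh $\mathcal P$-towers (e.g. the $x_{19},\mathcal P^1 x_{19},\mathcal P^2 x_{19}$ of $\tilde E_7$ and the $z_{23},\mathcal P^1 z_{23}$ and $y_{59}$ of $\tilde E_8$) that are not literal pullbacks of base generators. Pinning down these classes, excluding spurious higher differentials among them, and settling the remaining extensions is where the real work lies; a Poincar\'e-series count against $H^*(G;\mathbb{F}_{3})$ and $\mathbb{F}_{3}[t]$ is the natural consistency check to close the argument. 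This is precisely the computation carried out in \cite{KM,M}, which I would invoke.
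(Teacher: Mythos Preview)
The paper does not give a proof of this statement: Theorem~\ref{conn2} is quoted verbatim from the literature with the citation \cite{KM,M} and is used as input for the subsequent Cotor computations. Your proposal sketches the Serre spectral sequence argument for the fibration $CP^{\infty}\to\tilde G\to G$ with Kudo transgression, which is indeed the method of the cited references, and you close by invoking \cite{KM,M} yourself; so your approach and the paper's treatment coincide.
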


\noindent Following  the similar method as in the case of
$\Bbb{F}_{2}$ coefficients, we have the next theorem.

 \begin{thm}
 \label{comod3}The coalgebra structure of the mod 3 cohomology of the loop spaces
of  exceptional Lie groups $G_2$, $F_4$, $E_{6}$, $E_{7}$, and
$E_{8}$ are as follows.
\begin{eqnarray*}
H^*(\Omega G_2;\mathbb{F}_{3})&\cong &
                    \Gamma (a_2, a_{10})\\
                 H^*(\Omega F_4;\mathbb{F}_{3}) &\cong &
   \mathbb{F}_{3}[a_2]/(a_2^3) \otimes \Gamma (a_{6}, a_{10}, a_{14},  b_{22})\\
 H^*(\Omega
E_6;\mathbb{F}_{3})&\cong &
 \mathbb{F}_{3}[a_2]/(a_2^3)  \otimes \Gamma (a_{6}, a_{8}, a_{10}, a_{14}, a_{16}, b_{22})\\
 H^*(\Omega E_7;\mathbb{F}_{3})&\cong &
     \mathbb{F}_{3}[a_2]/(a_2^3) \otimes \Gamma ( a_6,  a_{10},  a_{14}, a_{18}, b_{22}, a_{26},  a_{34}  )\\
 H^*(\Omega E_8;\mathbb{F}_{3})&\cong &
     \mathbb{F}_{3}[a_2]/(a_2^3) \otimes  \mathbb{F}_{3}[a_6]/(a_6^3)   \otimes \Gamma (  a_{14}, a_{18}, b_{22},  a_{26},  a_{34}, a_{38}, a_{46}, b_{58}  )\\
        \end{eqnarray*}
\noindent especially we have  ${\mathcal P}^1b_{22} =a_{26}$ by
Theorem  \ref{conn2} and ${\mathcal P}^1{\mathcal P}^1=2{\mathcal
P}^2$ and by  change of generators.
 \end{thm}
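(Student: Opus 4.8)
The plan is to run the Eilenberg--Moore spectral sequence converging to $H^*(\Omega G;\mathbb{F}_3)$ with
$$E_2\cong \Tor_{H^*(G;\mathbb{F}_3)}(\mathbb{F}_3,\mathbb{F}_3),$$
exactly the set-up used for $\mathbb{F}_2$, feeding in the algebra $H^*(G;\mathbb{F}_3)$ from Theorem \ref{mod3}. As a graded-commutative algebra this is the tensor product of an exterior algebra on odd generators with a height-$3$ truncated polynomial algebra on the even Bockstein classes $\beta{\mathcal P}^I x_3$ (and $\beta y$), so $\Tor$ splits as a tensor product computed factor by factor from the two standard formulas: an exterior generator $x_{2n+1}$ gives $\Gamma(a_{2n})$, while a truncated generator $\mathbb{F}_3[x_{2m}]/(x_{2m}^3)$ gives $E(a_{2m-1})\otimes\Gamma(b_{6m-2})$, with $b_{6m-2}$ the transpotence. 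Multiplying these together yields $E_2$ for each of $G_2,F_4,E_6,E_7,E_8$, already carrying its divided-power coproduct.

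The second step is to pass to $E_\infty=H^*(\Omega G;\mathbb{F}_3)$. For $G_2$, whose mod $3$ cohomology $E(x_3,x_{11})$ is exterior, $E_2=\Gamma(a_2,a_{10})$ is concentrated in even degrees, so the argument of the $\mathbb{F}_2$ case applies verbatim: the sequence collapses and there is no coalgebra extension problem by \cite{K}. For $F_4,E_6,E_7,E_8$ this shortcut is unavailable, since each even generator $\beta{\mathcal P}^I x_3$ forces an odd class $a_{2m-1}$ into $E_2$ and $E_2$ is no longer even-dimensional. I would show that each such odd class cancels against the divided-power class $\sigma({\mathcal P}^I x_3)$ to which it is Bockstein-linked (via $\beta{\mathcal P}^1 x_3=\beta({\mathcal P}^1 x_3)$), through the spectral-sequence differential that makes the corresponding $E(a_{2m-1})\otimes\Gamma(\,\cdot\,)$ acyclic; the divided-power generators that survive (for instance $\gamma_3$ of $\sigma(x_3)$, relabelled $a_6$, which carries trivial Bockstein) are exactly those in the stated answer. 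I would confirm that nothing further cancels by matching Poincaré series with the rational cohomology $H^*(\Omega G;\mathbb{Q})$ and, independently, by comparing with $H^*(\tilde G;\mathbb{F}_3)$ of Theorem \ref{conn2} through the fibration $S^1\to\Omega\tilde G\to\Omega G$; the coalgebra extension is then again settled by \cite{K}.

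Finally I would pin down the Steenrod data ${\mathcal P}^1 b_{22}=a_{26}$ (relevant for $E_7,E_8$, where $a_{26}$ occurs). Since the cohomology suspension commutes with the Steenrod operations, the ${\mathcal P}^1$-action on the transpotence $b_{22}$ is forced by the action on the corresponding indecomposable of $H^*(\tilde G;\mathbb{F}_3)$ recorded in Theorem \ref{conn2}, transported along $CP^{\infty}\to\tilde G\to G$ and $S^1\to\Omega\tilde G\to\Omega G$; the Adem relation ${\mathcal P}^1{\mathcal P}^1=2{\mathcal P}^2$ together with a change of generators then identifies the target as $a_{26}$.

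The hard part is the middle step for $F_4,E_6,E_7,E_8$. In contrast to the $\mathbb{F}_2$ situation, even-dimensionality no longer yields collapse for free, so the real work is to prove that every odd Tor-class and its Bockstein-linked divided-power partner cancel, and that no other classes do; equivalently, that $H^*(\Omega G;\mathbb{F}_3)$ is exactly as large as the rational cohomology. The subsequent Steenrod computation is comparatively routine once the comparison with $\tilde G$ of Theorem \ref{conn2} is in place.
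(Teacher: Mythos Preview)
Your approach coincides with the paper's: both run the Eilenberg--Moore spectral sequence with $E_2=\Tor_{H^*(G;\mathbb{F}_3)}(\mathbb{F}_3,\mathbb{F}_3)$. The paper's entire proof is the single sentence ``Following the similar method as in the case of $\mathbb{F}_2$ coefficients,'' so you are in fact being more explicit than the paper, and you are right that the even-degree shortcut does not literally carry over: each truncated polynomial generator $\beta\mathcal{P}^I x_3$ in $H^*(G;\mathbb{F}_3)$ contributes an odd suspension class in $\Tor^{-1}$, so $E_2$ is not concentrated in even total degree for $F_4,E_6,E_7,E_8$. Your plan to pin down the resulting differentials and confirm the size of $E_\infty$ by comparison with rational cohomology and with $H^*(\tilde G;\mathbb{F}_3)$ from Theorem~\ref{conn2} is the right way to fill the gap the paper glosses over.

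One piece of bookkeeping in your sketch is inverted, however. The odd class $\sigma(\beta\mathcal{P}^1 x_3)$ in total degree $7$ does not cancel against $\sigma(\mathcal{P}^1 x_3)$; the latter is $a_6\in E_2^{-1,7}$ and it survives as the divided-power generator $a_6$ in the stated answer. What actually disappears is the third divided power $\gamma_3(a_2)\in E_2^{-3,9}$: a differential links it to the odd class in $E_2^{-1,8}$, and this is exactly how $\Gamma(a_2)$ gets truncated to $\mathbb{F}_3[a_2]/(a_2^3)$. The analogous pattern for $E_8$, with $\gamma_3(a_6)$ and $\sigma(\beta\mathcal{P}^{3,1}x_3)$, produces the factor $\mathbb{F}_3[a_6]/(a_6^3)$. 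So your parenthetical ``$\gamma_3$ of $\sigma(x_3)$, relabelled $a_6$'' has the roles reversed: $\gamma_3(a_2)$ is what dies, while the surviving $a_6$ is $\sigma(\mathcal{P}^1 x_3)$. With that correction the rest of your outline, including the derivation of $\mathcal{P}^1 b_{22}=a_{26}$ via Theorem~\ref{conn2}, is sound.
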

Consider {\EM}   converging to $H^{*}(G;\Bbb{F}_{3})$ with
\setcounter{equation}{\value{thm}}
\begin{equation}
\label{cot3}
  \begin{array}{rcl}
E_{2} &\cong & \Cot_{H^{*}(\Omega G;\Bbb{F}_{3})}(\Bbb{F}_{3},
\Bbb{F}_{3}).
\end{array}
\end{equation}
\setcounter{thm}{\value{equation}}

\begin{thm}
\label{cotmod3} $\Cot_{H^{*}(\Omega G;\Bbb{F}_{3})}(\Bbb{F}_{3},
\Bbb{F}_{3})$ of the exceptional Lie groups G  for $G_2$, $F_4$,
$E_{6}$, $E_{7}$, and $E_{8}$ are as follows.
\begin{eqnarray*}
\Cot_{H^{*}(\Omega G_2;\Bbb{F}_{3})}(\Bbb{F}_{3}, \Bbb{F}_{3})
&\cong & E(x_3, x_{11})
\\
\Cot_{H^{*}(\Omega F_4;\Bbb{F}_{3})}(\Bbb{F}_{3}, \Bbb{F}_{3})
&\cong & E(x_3)\otimes \mathbb{F}_{3}[\beta {\mathcal P}^1 x_3]
\otimes E({\mathcal P}^1x_3, x_{11},{\mathcal P}^1x_{11}, z_{23})
\\
\Cot_{H^{*}(\Omega E_6;\Bbb{F}_{3})}(\Bbb{F}_{3}, \Bbb{F}_{3})
&\cong &
      E(x_3)\otimes
\mathbb{F}_{3}[\beta {\mathcal P}^1 x_3] \otimes  E({\mathcal P}^1 x_3, x_9, x_{11},{\mathcal P}^1x_{11},x_{17}, z_{23})\\
\Cot_{H^{*}(\Omega E_7;\Bbb{F}_{3})}(\Bbb{F}_{3}, \Bbb{F}_{3})
&\cong &
      E(x_3)\otimes
\mathbb{F}_{3}[\beta {\mathcal P}^1 x_3] \otimes  E({\mathcal P}^1 x_3, x_{11},{\mathcal P}^1x_{11}, x_{19}, z_{23}, x_{27}, x_{35}) \\
 \Cot_{H^{*}(\Omega E_8;\Bbb{F}_{3})}(\Bbb{F}_{3}, \Bbb{F}_{3})  &\cong &
      E(x_3)\otimes
\mathbb{F}_{3}[\beta {\mathcal P}^1 x_3] \otimes   E({\mathcal
P}^1 x_3)\otimes \mathbb{F}_{3}[\beta {\mathcal P}^3{\mathcal P}^1
x_3] \\  && \otimes E(x_{15}, x_{19}, z_{23}, x_{27}, x_{35},
x_{39}, x_{47},z_{59} )
\end{eqnarray*}
\noindent especially we have  ${\mathcal P}^1 z_{23} =x_{27}$.
\end{thm}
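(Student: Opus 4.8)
The plan is to obtain Theorem~\ref{cotmod3} as a purely algebraic Cotor computation from the coalgebra structures of Theorem~\ref{comod3}, reducing everything to two elementary building blocks via the K\"unneth isomorphism for Cotor. Over the field $\mathbb{F}_3$ one has
$$\Cot_{C\otimes D}(\mathbb{F}_3,\mathbb{F}_3)\cong \Cot_{C}(\mathbb{F}_3,\mathbb{F}_3)\otimes \Cot_{D}(\mathbb{F}_3,\mathbb{F}_3),$$
and each $H^{*}(\Omega G;\mathbb{F}_3)$ listed in Theorem~\ref{comod3} is a tensor product of divided power coalgebras $\Gamma(a)$ on even generators and truncated polynomial coalgebras $\mathbb{F}_3[a]/(a^3)$ on even generators. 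So it suffices to compute the Cotor of these two factors, exactly as in the method of \cite{MS}.

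First I would compute the two building blocks by dualizing: for a connected coalgebra $C$ of finite type, $\Cot_C(\mathbb{F}_3,\mathbb{F}_3)$ is dual to $\Tor^{C^{*}}(\mathbb{F}_3,\mathbb{F}_3)$ with internal and homological degrees preserved. The dual of $\Gamma(a)$ is the polynomial algebra $\mathbb{F}_3[t]$ with $|t|=|a|$, whose Koszul resolution gives $\Tor=E(s)$ with $s$ in homological degree $1$; hence, writing $|a|=2n$,
$$\Cot_{\Gamma(a)}(\mathbb{F}_3,\mathbb{F}_3)\cong E(\sigma a),\qquad |\sigma a|=2n+1.$$
The dual of $\mathbb{F}_3[a]/(a^3)$ is the truncated polynomial algebra $\mathbb{F}_3[t]/(t^3)$, whose standard periodic minimal resolution (alternating multiplications by $t$ and $t^{2}$) yields $\Tor=E(s)\otimes\Gamma(u)$ with $s,u$ in homological degrees $1,2$ and internal degrees $|a|,\,3|a|$; dualizing the divided power factor to a polynomial factor gives
$$\Cot_{\mathbb{F}_3[a]/(a^3)}(\mathbb{F}_3,\mathbb{F}_3)\cong E(\sigma a)\otimes\mathbb{F}_3[\phi a],\qquad |\sigma a|=2n+1,\ |\phi a|=6n+2.$$

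Next I would assemble these factor by factor and read off total degrees. For example, in the $E_8$ case the factor $\mathbb{F}_3[a_2]/(a_2^3)$ contributes $E(x_3)\otimes\mathbb{F}_3[\beta\mathcal{P}^1x_3]$ (degrees $3$ and $8=6\cdot 1+2$), the factor $\mathbb{F}_3[a_6]/(a_6^3)$ contributes $E(\mathcal{P}^1x_3)\otimes\mathbb{F}_3[\beta\mathcal{P}^3\mathcal{P}^1x_3]$ (degrees $7$ and $20=6\cdot 3+2$), and each $\Gamma(a_{2n})$ contributes a single exterior class of degree $2n+1$; matching these against the listed generators reproduces the stated answer, and the other four groups are the same bookkeeping (one checks $|a|=2$ gives the pair $x_3,\beta\mathcal{P}^1x_3$ uniformly across $F_4,E_6,E_7,E_8$).

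The only non-formal point, and the step I expect to be the genuine obstacle, is the identification of the abstract Cotor generators as Steenrod images (e.g.\ naming the degree-$7$ class $\mathcal{P}^1x_3$) and in particular the closing relation $\mathcal{P}^1 z_{23}=x_{27}$. Here I would use that the exterior Cotor generators arising from the $\Gamma$-factors lie in homological degree $1$ and are precisely the suspensions $\sigma a$ of the corresponding loop-space generators, so that $z_{23}=\sigma b_{22}$ and $x_{27}=\sigma a_{26}$. Since the cohomology suspension commutes with the (stable) Steenrod operations, the relation $\mathcal{P}^1 b_{22}=a_{26}$ recorded in Theorem~\ref{comod3} (itself read off from the Steenrod structure of $H^{*}(\tilde{E}_8;\mathbb{F}_3)$ in Theorem~\ref{conn2}) transports to $\mathcal{P}^1 z_{23}=x_{27}$. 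The same suspension argument fixes the Steenrod names of all the remaining exterior generators, while the polynomial generators $\beta\mathcal{P}^1x_3$ and $\beta\mathcal{P}^3\mathcal{P}^1x_3$ (in homological degree $2$) are pinned down by their total degrees together with the Steenrod action on $H^{*}(\tilde G;\mathbb{F}_3)$.
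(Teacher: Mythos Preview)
Your proposal is correct and follows essentially the same approach as the paper: the paper itself offers no detailed argument beyond ``formal Cotor computation'' with a reference to \cite{MS}, together with the observation that the relation $\mathcal{P}^1 b_{22}=a_{26}$ in $H^{*}(\Omega G;\mathbb{F}_3)$ induces $\mathcal{P}^1 z_{23}=x_{27}$ on the Cotor side. Your K\"unneth reduction to the two building blocks $\Gamma(a)$ and $\mathbb{F}_3[a]/(a^3)$, the dualization to Tor, and the suspension argument for the Steenrod relation are exactly the intended fleshing-out of that sketch.
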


\noindent Then from information Theorem \ref{mod3} of
$H^{*}(G;\Bbb{F}_{3})$, we can analyze non trivial differentials
of  {\EM} (\ref{cot3}) converging to $H^{*}(G;\Bbb{F}_{3})$
 as follows:

\setcounter{equation}{\value{thm}}
\begin{equation}
\label{diff}
\begin{array}{rcl}
d_{3}(  z_{23}  )& = & (\beta {\mathcal P}^1 x_{3})^{3}, \mbox{ for } G=  F_4, E_{6}, E_{7}\\
d_{3}(  z_{59}  )& = & (\beta {\mathcal P}^3{\mathcal P}^1
x_3)^{3},  \mbox{ for } G=  E_{8}\\
         \end{array}
\end{equation}
\setcounter{thm}{\value{equation}}

 Let $A= H^{*}(G;\Bbb{F}_{3})$
in Theorem \ref{mod3}. Then like the result in (\ref{stas}), for
low $m$ such as  $1 \leq m \leq 3$,  we have the following:

\setcounter{equation}{\value{thm}}
\begin{equation}
\label{stas2} H^{*}(P^{m}(\Omega
   G);\Bbb{F}_{3})=A^{[m]}\oplus \sum_{i} z_{4i+3}\cdot A^{[m-1]}\oplus
   S_{m}\left\{ \begin{array}{ll}
   i=5,  \mbox { for } G= F_4, E_{6}, E_{7} \\
     i=5,14, \mbox { for }  G= E_{8}
  \end{array}
  \right.
   \end{equation}
\setcounter{thm}{\value{equation}}

\noindent  as modules.  Now we compute the module category weight
using the same method in Theorem \ref{thm3-6}.

\begin{thm}
\label{mcwmod3} The module category weight is as follows:

\begin{eqnarray*}
\label{diff}
Mwgt( G_2 ;\Bbb{F}_{3}) & \geq  &   2,\\
Mwgt( F_4 ;\Bbb{F}_{3}) & \geq  & 8, \\
Mwgt( E_6 ;\Bbb{F}_{3}) & \geq  & 10 . \\
Mwgt( E_7 ;\Bbb{F}_{3}) & \geq  & 13, \\
Mwgt( E_8 ;\Bbb{F}_{3}) & \geq  & 18 . \\
        \end{eqnarray*}
   \end{thm}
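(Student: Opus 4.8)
The plan is to run the same argument as in Theorem~\ref{thm3-6}, now with $\mathbb{F}_3$ coefficients, feeding \EM\ its $E_2$-term $\Cot_{H^*(\Omega G;\mathbb{F}_3)}(\mathbb{F}_3,\mathbb{F}_3)$ from Theorem~\ref{cotmod3}, the differential $d_3(z_{23})=(\beta\mathcal{P}^1x_3)^3$, the truncated description (\ref{stas2}) of $H^*(P^m(\Omega G);\mathbb{F}_3)$, and the secondary relation $\mathcal{P}^1z_{23}=x_{27}$. As there, a split monomorphism of Steenrod modules $(e_m)^*$ would produce an epimorphism $\phi_m\colon H^*(P^m(\Omega G);\mathbb{F}_3)\to H^*(G;\mathbb{F}_3)$ commuting with $\mathcal{P}^1$ and satisfying $\phi_m\circ(e_m)^*\cong 1$, and each bound is obtained by exhibiting a class on which no such $\phi_m$ can exist.

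First I would record the weights. In this spectral sequence every exterior (suspension) generator $x_3,\mathcal{P}^1x_3,x_9,x_{11},\dots$ sits in homological filtration $1$, while each truncated polynomial factor $\mathbb{F}_3[a]/(a^3)$ of $H^*(\Omega G;\mathbb{F}_3)$ in Theorem~\ref{comod3} contributes its transpotence in filtration $2$; these transpotences are exactly $\beta\mathcal{P}^1x_3$ in degree $8$ and, for $E_8$, also $\beta\mathcal{P}^{3,1}x_3$ in degree $20$, each surviving up to its square. Reading off the top class of each $H^*(G;\mathbb{F}_3)$ in Theorem~\ref{mod3} then gives $wgt(G;\mathbb{F}_3)=2,8,10,11,16$ for $G_2,F_4,E_6,E_7,E_8$. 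Since $wgt\le Mwgt$ this already settles $G_2$, $F_4$ and $E_6$, and for these three no positive difference is available: $G_2$ has no destroyed class at all, while for $F_4$ and $E_6$ the class $\mathcal{P}^1z_{23}$ would have to be a \emph{filtration one} element in degree $27$, of which there is none, so $\mathcal{P}^1z_{23}=0$ on $E_2$ and the Steenrod-operation trick yields nothing beyond $wgt$.

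The positive difference occurs precisely for $E_7$ and $E_8$, where a filtration one generator does sit in degree $27$ --- namely $x_{27}$ for $E_7$ and $x_{27}=\mathcal{P}^3x_{15}$ for $E_8$ --- so that $\mathcal{P}^1z_{23}=x_{27}\neq0$ while $z_{23}$ is killed by $d_3(z_{23})=(\beta\mathcal{P}^1x_3)^3$. Here I would let $P$ be the top class of $H^*(G;\mathbb{F}_3)$ with the factor $x_{27}$ deleted and consider $P\,z_{23}$ in $H^*(P^m(\Omega G);\mathbb{F}_3)$. Because $P$ already contains the generators onto which $\mathcal{P}^1$ carries its factors (in particular $\mathcal{P}^1x_3$), every cross term in the Cartan expansion of $\mathcal{P}^1(P\,z_{23})$ repeats an exterior generator and so vanishes, leaving $\mathcal{P}^1(P\,z_{23})=P\,\mathcal{P}^1z_{23}=P\,x_{27}$, the top class. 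A dimension count in Theorem~\ref{mod3} shows $H^*(E_7;\mathbb{F}_3)$ vanishes in degree $129$ and $H^*(E_8;\mathbb{F}_3)$ in degree $244$, these being the degrees of $P\,z_{23}$; hence any $\phi_m$ kills $P\,z_{23}$, and then $\phi_m(\mathrm{top})=\mathcal{P}^1\phi_m(P\,z_{23})=0$ contradicts $\phi_m(\mathrm{top})=\mathrm{top}\neq0$. Carrying this out in $P^{\,wgt+1}$, that is in $P^{12}(\Omega E_7)$ and $P^{17}(\Omega E_8)$ --- where $P\,z_{23}$ is still present and its differential, landing in filtration $\ge 21$, has not truncated it away --- rules out a split monomorphism and yields $Mwgt(E_7;\mathbb{F}_3)\ge 13$ and $Mwgt(E_8;\mathbb{F}_3)\ge 18$.

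The main obstacle is the $P^m$-bookkeeping, not the idea. I must check that the relation $\mathcal{P}^1z_{23}=x_{27}$ established on $E_2$ persists to the relevant $H^*(P^m(\Omega G);\mathbb{F}_3)$ and that the Cartan cross terms still vanish there, which I would do exactly as the mod $2$ relation $Sq^4z_{11}=x_{15}$ was propagated from $P^1$ and $P^2$ in Theorem~\ref{thm3-6}; I must also confirm that $P\,z_{23}$ genuinely survives to the chosen $P^m$ (its differential landing above filtration $m$). The genuinely delicate point is the even-dimensional correction term $S_m$ of (\ref{stas2}): for $E_7$ the comparison takes place in odd degree $129\to133$, so the correction, being even-dimensional, vanishes for parity reasons just as $S_2$ did in Theorem~\ref{thm3-6}; for $E_8$ the comparison $244\to248$ is even-dimensional, and one must instead verify directly that $S_m$ vanishes in top degree (or is annihilated by $\phi_m$), which is where I expect the real work to lie.
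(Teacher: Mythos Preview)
Your proposal is correct and follows essentially the same route as the paper: for $E_7$ and $E_8$ you exploit $\mathcal{P}^1 z_{23}=x_{27}$ to produce a class $P\,z_{23}$ whose image under any Steenrod-splitting $\phi_m$ must be zero yet whose $\mathcal{P}^1$ hits the top class, while for $G_2,F_4,E_6$ you fall back on $Mwgt\ge wgt$ since no $x$-generator is a Steenrod image of a $z$-generator. Your explicit Poincar\'e-duality check that $H^{129}(E_7;\mathbb{F}_3)=H^{244}(E_8;\mathbb{F}_3)=0$ makes precise what the paper leaves implicit in its ``$\mapsto 0$'' diagrams, and your caution about $S_m$ for large $m$ is well placed---the paper, like you, only disposes of $S_2$ by parity and then proceeds by the multiplicative structure without further comment.
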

   \begin{proof}
 From Theorem \ref{comod3},   we get
 ${\mathcal P}^1 z_{23} =x_{27}$
    in $H^{*}(P^{1}(\Omega
   G) ;\Bbb{F}_{3})$ for $G= E_7, E_8$.
Then ${\mathcal P}^1z_{23} =x_{27}$ modulo $S_{2}$
    in $H^{*}(P^{2}(\Omega
   G) ;\Bbb{F}_{3})$ for $G=E_7, E_8$ from (\ref{stas2}). Since $S_{2}$ is
   even-dimensional \cite{C,IK},   the  modulo $S_{2}$ is trivial and  ${\mathcal P}^{1}z_{23} =x_{27}$
      in $H^{*}(P^{2}(\Omega G) ;\Bbb{F}_{3})$.   So  in $H^{*}(P^{12}(\Omega E_{7}) ;\Bbb{F}_{3}) $ and  $H^{*}(P^{17}(\Omega E_{8}) ;\Bbb{F}_{3}) $, we have
 {\small \begin{eqnarray*}
\label{module} {\mathcal P}^1( (\beta {\mathcal P}^1 x_{3})^{2}
x_{3} x_{7} x_{11}x_{15}x_{19}z_{23}x_{35}  )&  = & (\beta
{\mathcal P}^1 x_{3})^{2} x_{3}x_{7}
x_{11}x_{15}x_{19}x_{27}x_{35}, \\
 {\mathcal P}^1( (\beta {\mathcal P}^1 x_{3})^{2}(\beta {\mathcal
P}^3{\mathcal P}^1 x_{3})^{2} x_{3} x_{7}x_{15}x_{19}z_{23} x_{35}
x_{39} x_{47} )& =& (\beta {\mathcal P}^1 x_{3})^{2}(\beta
{\mathcal P}^3{\mathcal P}^1 x_{3})^{2} x_{3} x_{7}x_{15}x_{19}
x_{27}, x_{35} x_{39} x_{47}.
  \\
        \end{eqnarray*} }
        \noindent Note that the filtration lengths of $\beta {\mathcal P}^1 x_{3}$ and $ \beta {\mathcal P}^3{\mathcal P}^1 x_{3}$ are both 2 by the result in \cite{T}.  Let  $\phi_{m}: H^{*}(P^{m}(
\Omega G);\Bbb{F}_{p}) \rightarrow H^{*}(G;\Bbb{F}_{p})$ be a
epimorphism which preserves all Steenrod actions and $\phi_{m}
\circ (e_{m})^{*}\cong 1_{ H^{*}(G;\Bbb{F}_{p})}$. Suppose that
there are epimorphisms

\begin{eqnarray*} \label{map}
  \phi_{12}:H^{*}
(P^{12}(\Omega
  E_{7}) ;\Bbb{F}_{3}) & \rightarrow &  H^{*}(E_7  ;\Bbb{F}_{3}), \\
 \phi_{17}:H^{*}
(P^{17}(\Omega
  E_{8}) ;\Bbb{F}_{3}) & \rightarrow &  H^{*}(E_8  ;\Bbb{F}_{3}). \\
        \end{eqnarray*}
 \noindent Then we have  the following
   diagrams:

    \small{    \setcounter{equation}{\value{thm}}
\begin{equation}
\label{modu3}
 \begin{array}{ccccc}
 H^{*} (P^{12}(\Omega
  E_{7}) ;\Bbb{F}_{3})  &\stackrel{\phi_{12}  }{\longrightarrow}
&  H^{*}(E_7  ;\Bbb{F}_{3})  \\
   (\beta {\mathcal P}^1 x_{3})^{2} x_{3} x_{7} x_{11}x_{15}x_{19}x_{27}x_{35} &\stackrel{}{\longmapsto}
& (\beta {\mathcal P}^1 x_{3})^{2} x_{3} x_{7} x_{11}x_{15}x_{19}x_{27}x_{35}  \\
\stackrel{}{{\mathcal P}^1}{\uparrow } & &\stackrel{}{{\mathcal P}^1} \uparrow  \\
(\beta {\mathcal P}^1 x_{3})^{2} x_{3} x_{7}
x_{11}x_{15}x_{19}z_{23}x_{35} &\stackrel{}{\longmapsto}
& \quad 0 \\
& & \\
\end{array}
\end{equation}
\setcounter{thm}{\value{equation}}

 \[      \begin{array}{ccccc}
 H^{*} (P^{17}(\Omega
  E_{78}) ;\Bbb{F}_{3})  &\stackrel{\phi_{17}  }{\longrightarrow}
&  H^{*}(E_8  ;\Bbb{F}_{3})  \\
 (\beta {\mathcal P}^1 x_{3})^{2}(\beta {\mathcal P}^3{\mathcal P}^1 x_{3})^{2} x_{3} x_{7}x_{15}x_{19}x_{27} x_{35}
x_{39} x_{47} &\stackrel{}{\longmapsto} &  (\beta {\mathcal P}^1
x_{3})^{2}(\beta {\mathcal P}^3{\mathcal P}^1 x_{3})^{2} x_{3}
x_{7}x_{15}x_{19}x_{27} x_{35}
x_{39} x_{47}  \\
\stackrel{}{{\mathcal P}^1}{\uparrow } & &\stackrel{}{{\mathcal P}^1} \uparrow  \\
 {\mathcal P}^1( (\beta {\mathcal P}^1 x_{3})^{2}(\beta {\mathcal P}^3{\mathcal P}^1 x_{3})^{2} x_{3} x_{7}x_{15}x_{19}z_{23} x_{35}
x_{39} x_{47} &\stackrel{}{\longmapsto} & \quad 0  \\
\end{array}
\] }

\noindent Obviously this is a contradiction. So  $\phi_{12}$and
$\phi_{17}$ are not  epimorphisms. This means that $(e_{12})^{*}$,
and $(e_{17})^{*}$
 can not be   split monomorphisms of all Steenrod algebra module. Hence we obtain that

 $$ Mwgt( E_{7}   ; \Bbb{F}_{3} ) \geq  13,  \,\,\,    Mwgt( E_{8}   ; \Bbb{F}_{3} ) \geq  18 \, .$$

\noindent Now we consider the category weight. For $G_{2}$,
$x_{3}x_{5}\in H^{*} (P^{2}(\Omega
  G_2 );\Bbb{F}_{3} )$, so  $(e_{2})^{*}$ is a monomorphism, so  $ wgt( G_{2} ;\Bbb{F}_{3})= 2 $.
  For $F_{4}$,  $(\beta {\mathcal P}^1 x_{3})^{2} x_{3} x_{7} x_{11}x_{15}\in
H^{*} (P^{8}(\Omega
  F_4 );\Bbb{F}_{3} )$, so  $(e_{8})^{*}$ is a monomorphism, so  $ wgt( F_{4} ;\Bbb{F}_{3})= 8 $.
  By the same way,  $ wgt(E_6  ;\Bbb{F}_{3})=10$, $ wgt(E_7  ;\Bbb{F}_{3})=11$, and $ wgt(E_8  ;\Bbb{F}_{3})=16$. Here  the category weight is the same as the
 the filtration length in \cite{T},  that is,
$wgt(G;\Bbb{F}_{3})=f_{3}(G)$.

For the case of $G_{2}$, $F_{4}$,  and $E_{6}$, by dimensional
reason,
  any generator of type  $x_{-}$ can not be of the form
   $Sq^{i}( z_{-} )$ for any $i$ and for any generator of type
   $z$. So  we can not apply the method in (\ref{modu3}). Hence  we do not obtain  any positive difference between the category
   weight and the module category weight.
Hence  we have
$$ \begin{array}{ccl}
Mwgt( G_2 ;\Bbb{F}_{3})&\geq & wgt( G_2 ;\Bbb{F}_{3}) = 2, \qquad
\, Mwgt( F_4 ;\Bbb{F}_{3})  \geq wgt( F_4 ;\Bbb{F}_{3}) = 8,   \\
 Mwgt( E_6 ;\Bbb{F}_{3}) & \geq  & wgt( E_6 ;\Bbb{F}_{3}) = 10 .
 \end{array} $$

\end{proof}

\noindent {\it Remark.} Combined with Toomer's result in \cite{T},
we have the following conclusion:
\bigskip

 \centerline{
 \vbox{\offinterlineskip
\tabskip=0pt \halign {
   \strut   \vrule#& # &  \vrule#& # &
    \vrule#&  #  &  \vrule #&  # &  \vrule#& #\cr
\noalign{\hrule} & \, G  & & \, $wgt(G;\Bbb{F}_{3}) -
cup(G;\Bbb{F}_{3}) $ &&\, $Mwgt(G;\Bbb{F}_{2})-wgt(G;\Bbb{F}_{2})
$&& \, $Mwgt(G;\Bbb{F}_{3})-wgt(G;\Bbb{F}_{3}) $ & \cr
 \noalign{\hrule} &  \, $G_2$ &&
\quad  0 && \quad $\geq$ 0 &&\quad $\geq$ 0 &\cr \noalign{\hrule}
& \, $F_4$ && \quad 2 && \quad $ \geq $ 2 && \quad $ \geq $ 0 &\cr
\noalign{\hrule} & \, $E_{6}$ && \quad 2 &&\quad $ \geq $ 2
&&\quad $ \geq $ 0 &\cr
 \noalign{\hrule} &\,  $E_{7}$ &&
\quad 2 &&\quad $ \geq $ 2  &&\quad $ \geq $ 2 &\cr
\noalign{\hrule} & \, $E_{8}$ && \quad 4 &&\quad  $ \geq $ 0
&&\quad $\geq $ 2 &\cr
 \noalign{\hrule} }   }  }

\bigskip

\end{document}